\documentclass[12pt,oneside,reqno]{amsart}

\hoffset=-0.7in \textwidth=6.4in \textheight=8.8in

\usepackage{amsmath,amssymb,amsthm,textcomp}
\usepackage{dsfont}
\usepackage{amsfonts,graphicx}
\usepackage[mathscr]{eucal}
\pagestyle{plain}
\usepackage{color}
\usepackage{csquotes}
\usepackage{relsize}
\usepackage{hyperref}
\usepackage[backend=bibtex,%
firstinits=true,%
doi=false,%
isbn=true,%
url=false,%
maxnames=99]{biblatex}%

\vfuzz=30pt
\AtEveryBibitem{\clearfield{issn}}
\AtEveryCitekey{\clearfield{issn}}
\addbibresource{mybibfile.bib}
\interdisplaylinepenalty=0

\numberwithin{equation}{section}
\DeclareNameAlias{sortname}{last-first}
\theoremstyle{definition}
\usepackage{mathtools}
%\addtolength{\evensidemargin}{-.1in}
%\addtolength{\textwidth}{-0.1in}
\addtolength{\topmargin}{-0.7in}
\addtolength{\textheight}{0.4in}

\numberwithin{equation}{section}

%\date{}

\newcommand{\ncom}{\newcommand}

\ncom{\beq}{\begin{equation}}
	\ncom{\eeq}{\end{equation}}
\ncom{\bea}{\begin{eqnarray*}}
	\ncom{\eea}{\end{eqnarray*}}
\ncom{\beqa}{\begin{eqnarray}}
	\ncom{\eeqa}{\end{eqnarray}}
\ncom{\nno}{\nonumber}
\ncom{\non}{\nonumber}
\ncom{\ds}{\displaystyle}
\ncom{\half}{\frac{1}{2}}
\ncom{\mbx}{\makebox{.25cm}}
\ncom{\hs}{\mbox{\hspace{.25cm}}}
\ncom{\rar}{\rightarrow}
\ncom{\Rar}{\Rightarrow}
\ncom{\noin}{\noindent}
\ncom{\bc}{\begin{center}}
	\ncom{\ec}{\end{center}}
\ncom{\sz}{\scriptsize}
\ncom{\rf}{\ref}
\ncom{\s}{\sqrt{2}}
\ncom{\sgm}{\sigma}
\ncom{\Sgm}{\Sigma}
\ncom{\psgm}{\sigma^{\prime}}
\ncom{\dt}{\delta}
\ncom{\Dt}{\Delta}
\ncom{\lmd}{\theta}
\ncom{\Lmd}{\Lambda}
%\ncom{\the}{\theta}
\ncom{\Th}{\Theta}
\ncom{\e}{\eta}
\ncom{\eps}{\epsilon}
\ncom{\pcc}{\stackrel{P}{>}}
\ncom{\lp}{\stackrel{L_{p}}{>}}
\ncom{\dist}{{\rm\,dist}}
\ncom{\sspan}{{\rm\,span}}
\ncom{\re}{{\rm Re\,}}
\ncom{\im}{{\rm Im\,}}
\ncom{\sgn}{{\rm sgn\,}}
\ncom{\ba}{\begin{array}}
	\ncom{\ea}{\end{array}}
\ncom{\hone}{\mbox{\hspace{1em}}}
\ncom{\htwo}{\mbox{\hspace{2em}}}
\ncom{\hthree}{\mbox{\hspace{3em}}}
\ncom{\hfour}{\mbox{\hspace{4em}}}
\ncom{\vone}{\vskip 2ex}
\ncom{\vtwo}{\vskip 4ex}
\ncom{\vonee}{\vskip 1.5ex}
\ncom{\vthree}{\vskip 6ex}
\ncom{\vfour}{\vspace*{8ex}}
\ncom{\norm}{\|\;\;\|}
\ncom{\integ}[4]{\int_{#1}^{#2}\,{#3}\,d{#4}}
%\ncom{\inp}[2]{\langle {#1} ,\,{#2} \rangle}
\ncom{\vspan}[1]{{{\rm\,span}\{ #1 \}}}
\ncom{\dm}[1]{ {\displaystyle{#1} } }
\ncom{\ri}[1]{{#1} \index{#1}}

\newtheorem{theorem}{\bf Theorem}[section]
\newtheorem{remark}{\bf Remark}[section]

\newtheorem{lemma}{Lemma}[section]

\newtheoremstyle
{remarkstyle}
{}
{11pt}
{}
{}
{\bfseries}
{:}
{     }
{\thmname{#1} \thmnumber{#2} }

\theoremstyle{remarkstyle}

%\newtheorem{example}[theorem]{\bf Example}
%\newtheorem{remark}[theorem]{\bf Remark}[section]
%\newtheorem{defn}[theorem]{\bf Definition}[section]
%\newtheorem{example}[theorem]{\bf Example}

%%%%%%%%%%%%%%%%%%%%%%%%%%%%%%%%%%%%%%%%%%%%%

\def\eps{\varepsilon}

\begin{document}
\title{Tempered Erlang Queue with Multiple Arrivals}
\author[Manisha Dhillon]{Manisha Dhillon}
\address{Manisha Dhillon, Department of Mathematics, Indian Institute of Technology Bhilai, Durg 491002, India.}
\email{manishadh@iitbhilai.ac.in}
\author[Kuldeep Kumar Kataria]{Kuldeep Kumar Kataria}
\address{Kuldeep Kumar Kataria, Department of Mathematics, Indian Institute of Technology Bhilai, Durg 491002, India.}
\email{kuldeepk@iitbhilai.ac.in}
\subjclass[2010]{Primary: 60K15; 60K25; Secondary: 60K20; 26A33}
\keywords{generalized counting process; Erlang service distribution; transient solution; queue length; busy period.}
\date{\today}
\begin{abstract}
In this paper, we introduce and study a time-changed variant of the Erlang queue with multiple arrivals where the time-changing component used is the first hitting time of a tempered stable subordinator. The system of fractional difference-differential equations that governs its state probabilities is derived which is solved to obtain their explicit expressions. An equivalent representation in terms of phases and the mean queue length is obtained. For a particular case, the distribution of inter-arrival times, inter-phase times, sojourn times,  busy period and that of conditional waiting times are derived. 
%Later, a similar study is done for two other time-changed variants of the Erlang queue with multiple arrivals. The other time changing components considered are the first hitting time of a gamma subordinator and that of an inverse Gaussian subordinator. Various distributional properties for these time-changed variants are obtained. 
\end{abstract}
	
\maketitle 
\section{Introduction}
The Erlang queue is a widely studied model in queueing theory. It is characterized by Poisson arrivals with rate $\lambda$ and a service mechanism that follows Erlang distribution with shape parameter $k$ and mean $1/\mu$. Each customer undergoes $k$ exponentially distributed service phases with parameter $k\mu$. The theory and application of queuing system models have become an integral part of many domains, for example, it is used in call centers to estimate the required number of service agents (see Spath and F{\"a}hnrich (2006)), to model a healthcare process (see Fomundam and Herrmann (2007)), in modeling of financial data (see Cahoy {\it et al.} (2015)) and in telecommunications (see Giambene (2014)), \textit{etc.}

Di Crescenzo \textit{et al.} (2003) studied the $M/M/1$ queue with effect of catastrophes. Luchak (1956, 1958) considered single-channel queues with Poisson arrivals and general service-time distributions where the solutions for time-dependent state probabilities of these models are derived. Giorno \textit{et al.} (2018) discussed a single-server queue with Poisson arrivals and a state-dependent service mechanism that leads to a logarithmic steady-state distribution.
 
Cahoy \textit{et al.} (2015) introduced a fractional generalization of the $M/M/1$ queue whereas Ascione \textit{et al.} (2018) studied a  fractional variant of the $M/M/1$ queue with catastrophes. Griffiths \textit{et al.} (2006) obtained the transient solution of Erlang queue and Ascione \textit{et al.} (2020) time-changed it by an independent inverse stable subordinator. Chen \textit{et al.} (2020) discussed Markovian bulk-arrival and bulk-service queues with state-dependent control. Recently, Pote and Kataria (2025) studied a queue in which the service system has Erlang distribution and the arrivals are modeled by a generalized counting process (GCP), a counting process with the  possibility of finitely many arrivals in an infinitesimal time interval. For more details on GCP and some recent works on it, we refer the reader to Di Crescenzo \textit{et al.} (2016), Kataria and Khandakar (2022), Dhillon and Kataria (2024), and the references therein.

In this paper, we introduce and study a time-changed variant of the Erlang queue with multiple arrivals. The time is changed via the first hitting time of tempered stable subordinator, and we refer to this time-changed process as the tempered Erlang queue with multiple arrivals. The inverse tempered stable subordinator captures heavy-tailed waiting phenomena while still ensuring finite mean waiting times, thus bridging the gap between classical Markovian models and purely fractional models. This provides a more realistic description of queueing systems where bursts of arrivals or delays occur in environments subject to memory and long-range dependence, but without infinite expected waiting times. This characteristic makes it a more suitable alternative to the case of inverse stable subordinator, which typically yields infinite expected values. Consequently, it establishes a bridge between the classical Markovian setting and fractional generalizations.
This study is particularly relevant in performance modeling of modern communication and service systems such as call centers, computer networks, and cloud services, where the traffic is bursty and varies frequently. The subordination by an inverse tempered stable subordinator allows us to model anomalous fluctuations in the arrival process more accurately, thereby improving system design, resource allocation, and prediction of congestion phenomena. The paper is organized as follows: 

In Section \ref{preliminaries}, we give some definitions and known results on the Mittag-Leffler function, tempered fractional derivative, Erlang queue with multiple arrivals, \textit{etc.}  In Section \ref{sec3}, we study the Erlang queue with multiple arrivals time-changed by an independent inverse tempered $\alpha$-stable subordinator. We derive the system of fractional differential equations that governs its state probabilities, and obtain the explicit expression for its transient state probabilities. We deduce the probability of no customer in the system at time $t\ge0$. Also, we give an equivalent characterization of the tempered Erlang queue with multiple arrivals in terms of the number of phases, that is, we study its queue length process. We obtain the closed form  expression for the mean queue length and derive its governing system of differential equations. Moreover, we obtain the distribution of busy period which is defined as the time duration which starts when a customer enters in an empty system and ends when the system becomes empty again. In Section \ref{se4}, by restricting the number of arrivals to one in an infinitesimal small time interval, we study the tempered Erlang queue, that is, the Erlang queue time-changed by an independent inverse tempered $\alpha$-stable subordinator. For this time-changed process, we present its various distributional properties derived as a particular case from Section \ref{sec3}. Additionally, we obtain the distribution of inter-arrival times, inter-phase times and sojourn times of the tempered Erlang queue. Later, the conditional waiting time distribution for tempered Erlang queue is derived.

%Later, a similar study is done for two other time-changed variants of the Erlang queue with multiple arrivals. The other time changing components considered are the first hitting time of a gamma subordinator and that of an inverse Gaussian subordinator. Various distributional properties for these time-changed variants are obtained. 
\section{Preliminaries}\label{preliminaries}  
Here, we collect some definitions and known results on Mittag-Leffler function, tempered stable subordinator and its inverse, fractional derivatives and Erlang queue with multiple arrivals that will be used in the subsequent sections.
\subsection{Mittag-Leffler function}
The three-parameter Mittag-Leffler function is defined as (see Kilbas {\it et al.} (2006), Eq. (1.9.1))
\begin{equation}\label{mitag}
E_{\alpha,\,\beta}^{\gamma}(x)=\frac{1}{\Gamma(\gamma)}\sum_{j=0}^{\infty} \frac{x^{j}\Gamma(j+\gamma)}{j!\Gamma(\alpha j+\beta)},\ x\in\mathbb{R},
\end{equation}
where $\alpha>0$, $\beta>0$ and $\gamma>0$. For $\gamma=1$, it reduces to two-parameter Mittag-Leffler function. Further, for $\gamma=\beta=1$, we get the Mittag-Leffler function. 

The following Laplace transform will be used (see Kilbas {\it et al.} (2006), Eq. (1.9.13)):
\begin{equation}\label{mi}
\mathcal{L}\big(t^{\beta-1}E^{\gamma}_{\alpha,\,\beta}(xt^{\alpha});s\big)=\frac{s^{\alpha\gamma-\beta}}{(s^{\alpha}-x)^{\gamma}},\ x\in\mathbb{R},\, s>|x|^{1/\alpha},\,  t>0.
\end{equation}
\subsection{Tempered stable subordinator and its inverse} A tempered stable subordinator $\{D_{\theta,\alpha}(t)\}_{t\geq0}$ with tempering parameter $\theta>0$ and stability index $\alpha\in(0,1)$ is  a non-decreasing L\'evy process. It is characterized by the following Laplace transform (see Meerschaert \textit{et al.} (2013)): 
\begin{equation*}
\mathbb{E}(e^{-sD_{\theta,\alpha}(t)})=e^{-t\Psi_{\theta,\alpha}(s)},\ s>0,
\end{equation*} 
where $\Psi_{\theta,\alpha}(s)=(s+\theta)^\alpha-\theta^\alpha$. Its first passage time, that is,  $Y_{\theta,\alpha}(t)=\inf\{x>0:D_{\theta,\alpha}(x)>t\}$, $t\ge0$ is known as the inverse tempered stable subordinator. 
Its density function $f_{\theta,\alpha}(t,x)$ has the following Laplace transform:
\begin{equation}\label{lapdeninvtemp}
\int_{0}^{\infty}e^{-st}f_{\theta,\alpha}(t,x)\ \mathrm{d}t=\frac{\Psi_{\theta,\alpha}(s)}{s}e^{-x\Psi_{\theta,\alpha}(s)}, \, s>0.
\end{equation}
%\subsection{Gamma subordinator and its inverse}
%A gamma subordinator $\{G_{a,b}(t)\}_{t\ge0}$, $a>0$, $b>0$ is a non-decreasing L\'evy process whose Laplace transform is 
%\begin{equation}\label{gammaberfn}
%\mathbb{E}(e^{-sG_{a,b}(t)})=\exp(-at\log(1+s/b)),\,\, s\ge0.
%\end{equation} 
%Its density function is given by (see Applebaum (2009), p. 55)
%\begin{equation}\label{gammadensity}
%h(x,t)=\frac{b^{at}}{\Gamma(at)}x^{at-1}e^{-bx},\,\, x>0.
%\end{equation}
%which is the solution of following differential equation
%(see Beghin (2014), Lemma 2.1):
%\begin{equation}\label{erkq}
%\frac{\partial}{\partial x}h(x,t)=-a(1-e^{-\partial_t/b})h(x,t),
%\end{equation}
%such that $h(x,0)=\delta(x)$
%and $\lim_{|x|\to+\infty}h(x,t)=0$.
%Here, $\delta(x)$ is the Dirac delta function and  $e^{c\partial_t}$ is a shift operator that acts on an analytic function $f:\mathbb{R}\to\mathbb{R}$ as follows: 
%\begin{equation}\label{gammaoper}
%e^{c\partial_t}f(t)\coloneqq\sum_{n=0}^{\infty}\frac{(c\partial_t)^n}{n!}f(t)=f(t+c),\,\, c\in\mathbb{R},
%\end{equation}
%where $\partial_t=\frac{\partial}{\partial t}$.
%
%The first-exit time of a gamma subordinator known as the inverse gamma subordinator is defined by $L_{a,b}(t)\coloneqq\inf\{s>0:G_{a,b}(s)>t\}$, $t\ge0$. Its density function is given by (see Colantoni and D'Ovidio (2023))
%\begin{equation}\label{invgamdens}
%l(x,t)=\frac{a e^{-t}}{\pi}\int_{0}^{\infty}\frac{y^{-ax}e^{-yt}}{1+y}(\pi \cos(a\pi x)-\ln(y)\sin(a\pi x))\ \mathrm{d}y, \, ax\notin\mathbb{N}.
%\end{equation}

\subsection{Fractional derivatives}
Let $\theta>0$ and $\alpha\in(0,1)$. The Riemann-Liouville fractional derivative  of a function $f$ is defined as
\begin{equation*}
\mathcal{D}_t^{\alpha} f(t)\coloneqq\frac{1}{\Gamma(1-\alpha)}\frac{\mathrm{d}}{\mathrm{d}t}\int_{0}^{t}\frac{f(s)}{(t-s)^\alpha}\mathrm{d}s,
\end{equation*} 
and the Riemann-Liouville tempered fractional derivative is defined as follows (see Alrawashdeh \textit{et al.} (2017), Eq. (2.6)):
\begin{equation*}
\mathcal{D}_t^{\theta,\alpha}f(t)\coloneqq e^{-\theta t}\mathcal{D}_t^{\alpha}(e^{\theta t}f(t))-\theta^\alpha f(t).
\end{equation*}

The Caputo tempered fractional derivative is defined by (see Alrawashdeh \textit{et al.} (2017), Eq. (2.8))
\begin{equation}\label{caputotemp}
\partial_t^{\theta,\alpha}f(t)=\mathcal{D}_t^{\theta,\alpha}f(t)-\frac{f(0)}{\Gamma(1-\alpha)}\int_{t}^{\infty}e^{-\theta s}\alpha s^{-\alpha -1}\mathrm{d}s.
\end{equation}
Its Laplace transform is given by (see Alrawashdeh \textit{et al.} (2017), Eq. (2.9))
\begin{equation}\label{laplacetempcapu}
\mathcal{L}(\partial_t^{\theta,\alpha}f(t);s)=\Psi_{\theta,\alpha}(s)\tilde{f}(s)-\frac{\Psi_{\theta,\alpha}(s)}{s}f(0),\, s>0,
\end{equation}
where $\Psi_{\theta,\alpha}(s)=(s+\theta)^{\alpha}-\theta^\alpha$.

\subsection{Erlang queue with multiple arrivals}
An Erlang queue with multiple arrivals (see Pote and Kataria (2025)) is a queueing system in which the arrival of customers is modeled according to a GCP in which $m$ customers arrive with transition rate $\lambda_m$, $m=1,2,\dots,l$ and the service system has Erlang distribution with shape parameter $k$ and rate $\mu$. Thus, the service system consist of $k$ phases such that each phase has exponential service time with parameter $k\mu$.

Let $\{\mathcal{Q}(t)\}_{t\ge0}$ denote the Erlang queue with multiple arrivals, that is, $\mathcal{Q}(t)=(\mathcal{N}(t),\mathcal{S}(t))$, $t\geq 0,$ with state space $S_{0}=S \cup \{(0,0)\}$, 
where $S=\{(n,s) \in \mathbb{N} \times \mathbb{N}: n \geq 1, \, 1 \leq s \leq k \}$. Here, $\mathcal{N}(t)$ denotes the number of customers in the system at time $t\ge0$ and $\mathcal{S}(t)$ denotes the phase of a customer being served at time $t$.
The state transition diagram of Erlang queue with two arrivals is depicted in Figure \ref{trans}.

For $t \geq 0$, let us denote the transient state probabilities as follows:
\begin{align*}
p_{0}(t)&=\mathrm{Pr}(\mathcal{Q}(t)=(0,0)|\mathcal{Q}(0)=(0,0)),\\
p_{n,s}(t)&=\mathrm{Pr}(\mathcal{Q}(t)=(n,s)|\mathcal{Q}(0)=(0,0)), \, (n,s) \in S,
\end{align*}
where $p_{0}(t)$ is the probability that there are no customers in the system at time $t$, and $p_{n,s}(t)$ is the probability of $n$ customers in the system at time $t$ and the customer being served is at phase $s$. 
For $|u|\le 1$, the generating function of $\{\mathcal{Q}(t)\}_{t\geq0}$ is given by
\begin{equation*}
G(u,t)=p_{0}(t)+\sum_{n=1}^{\infty}\sum_{s=1}^{k}u^{k(n-1)+s}p_{n,s}(t),\, t\ge0,
\end{equation*}
which is the solution of the following Cauchy problem (see Pote and Kataria (2025), Eq. (6)):
\begin{equation}\label{cauchyErlangq}
u\frac{\partial }{\partial t}G(u,t)=\Big(k\mu (1-u)-\Lambda u\Big(1-\sum_{m=1}^{l}c_m u^{mk}\Big)\Big)G(u,t)-k\mu (1-u)p_0(t), 
\end{equation}
where $\Lambda=\lambda_1+\lambda_2+\dots+\lambda_l$ and $c_m=\lambda_m/\Lambda$ for all $m=1,2,\dots,l$.
\section{Tempered Erlang queue with multiple arrivals}\label{sec3}
 Let $\{\mathcal{Q}(t)\}_{t\ge0}$ be the Erlang queue with arrival rates $\lambda_1$, $\lambda_2$, $\dots$, $\lambda_l$ such that it is independent of the inverse tempered $\alpha$-stable subordinator $\{Y_{\theta,\alpha}(t)\}_{t\geq0}$, $0<\alpha<1$, $\theta>0$. We introduce a time-changed variant of $\{\mathcal{Q}(t)\}_{t\ge0}$ as follows: $
 	\mathcal{Q}^{\theta,\alpha}(t)\coloneqq\mathcal{Q}(Y_{\theta,\alpha}(t))
 $ with $\mathcal{Q}^{0,1}(t)=\mathcal{Q}(t)$. 
We call the time-changed process  $\{\mathcal{Q}^{\theta,\alpha}(t)\}_{t\ge0}$ as the tempered Erlang queue with multiple arrivals. 

For $t\geq0$, its state probabilities are denoted by
\begin{align*}
p_{0}^{\theta,\alpha}(t)&=\mathrm{Pr}(\mathcal{Q}^{\theta,\alpha}(t)=(0,0)|\mathcal{Q}^{\theta,\alpha}(0)=(0,0)),\\
p_{n,s}^{\theta,\alpha}(t)&=\mathrm{Pr}(\mathcal{Q}^{\theta,\alpha}(t)=(n,s)|\mathcal{Q}^{\theta,\alpha}(0)=(0,0)),\, (n,s)\in S,
\end{align*}
where $S=\{(n,s) \in \mathbb{N} \times \mathbb{N}: n \geq 1, \, 1 \leq s \leq k \}$.	

Let us set the following notations: $\Lambda=\lambda_1+\lambda_2+\dots+\lambda_l$, $\beta=\theta^\alpha-\Lambda-k\mu$ and $\psi_{\theta,\alpha}(z)=(z+\theta)^\alpha-\theta^\alpha$, $z>0$.
For $\textbf{m}=(m_1,m_2,\dots,m_l)$, $\textbf{m}'=(m'_{1},m'_{2},\dots,m'_{l})$, $\textbf{m}''=(m_{1}+1,m_{2},\dots,m_{l})$, where $m_i\in\mathbb{N}_0$, let us denote $a_{r}^{n,s}(\textbf{m})=\sum_{j=1}^{l}m_{j}+k(r+1)-s+1$, $\pi_{r}^{n,s}(\textbf{m})=\alpha(a_{r}^{n,s}(\textbf{m})-1)+1$, 
\begin{align*}
\gamma_{h,n}^{0}(\textbf{m})&=h+n+k\sum_{j=1}^{l}jm_{j},\\
C_{h,n}^{0}(\textbf{m})&=h\Lambda^{n}\Big(\prod_{i=1}^{l}\frac{c_{i}^{m_{i}}}{m_{i}!}\Big)\frac{(k\mu)^{\gamma_{h,n}^{0}(\textbf{m})-n-1}}{(\gamma_{h,n}^{0}(\textbf{m})-n)!}(\gamma_{h,n}^{0}(\textbf{m})-1)!,\\ \beta_{h,n}^{0}(\textbf{m})&=\alpha(\gamma_{h,n}^{0}(\textbf{m})-1)+1,\\
A_{r}^{n,s}(\textbf{m})&=\Big(\prod_{i=1}^{l}\frac{c_{i}^{m_{i}}}{m_{i}!}\Big)\frac{\Lambda^{\sum_{j=1}^{l}m_{j}}(k\mu )^{k(r+1)-s}}{(k(r+1)-s)!}(a_{r}^{n,s}(\textbf{m})-1)!,\\
B_{r,h,w}^{n,s}(\textbf{m},\textbf{m}')&=k\mu A_{r}^{n,s}(\textbf{m})C_{h,w}^{0}(\textbf{m}'),\\
C_{r,h,w}^{n,s}(\textbf{m},\textbf{m}')&=\begin{cases}
k\mu A_{r}^{n,s+1}(\textbf{m})C_{h,w}^{0}(\textbf{m}'),\,s=1,2,\dots,k-1,\\
k\mu A_{r}^{n,1}(\textbf{m}'')C_{h,w}^{0}(\textbf{m}'),\, s=k,
\end{cases}	\\
c_{r,h,w}^{n,s}(\textbf{m},\textbf{m}')&=\begin{cases}
a_{r}^{n,s+1}(\textbf{m})+\gamma_{h,w}^{0}(\textbf{m}'),\,s=1,2,\dots,k-1,\\
a_{r}^{n,1}(\textbf{m}'')+\gamma_{h,w}^{0}(\textbf{m}'),\, s=k,
\end{cases}
\end{align*}
 $b_{r,h,w}^{n,s}(\textbf{m},\textbf{m}')=a_{r}^{n,s}(\textbf{m})+\gamma_{h,w}^{0}(\textbf{m}')$, $\rho_{r,h,w}^{n,s}(\textbf{m},\textbf{m}')=\alpha(b_{r,h,w}^{n,s}(\textbf{m},\textbf{m}')-1)+1$
and 
$\delta_{r,h,w}^{n,s}=\alpha(c_{r,h,w}^{n,s}(\textbf{m},\textbf{m}')-1)+1$, where  $n$, $r$, $w$ are non-negative integers, $h$ is a positive integer and $c_i=\lambda_i/\Lambda$.

\begin{figure}[htp]
	\includegraphics[width=12cm]{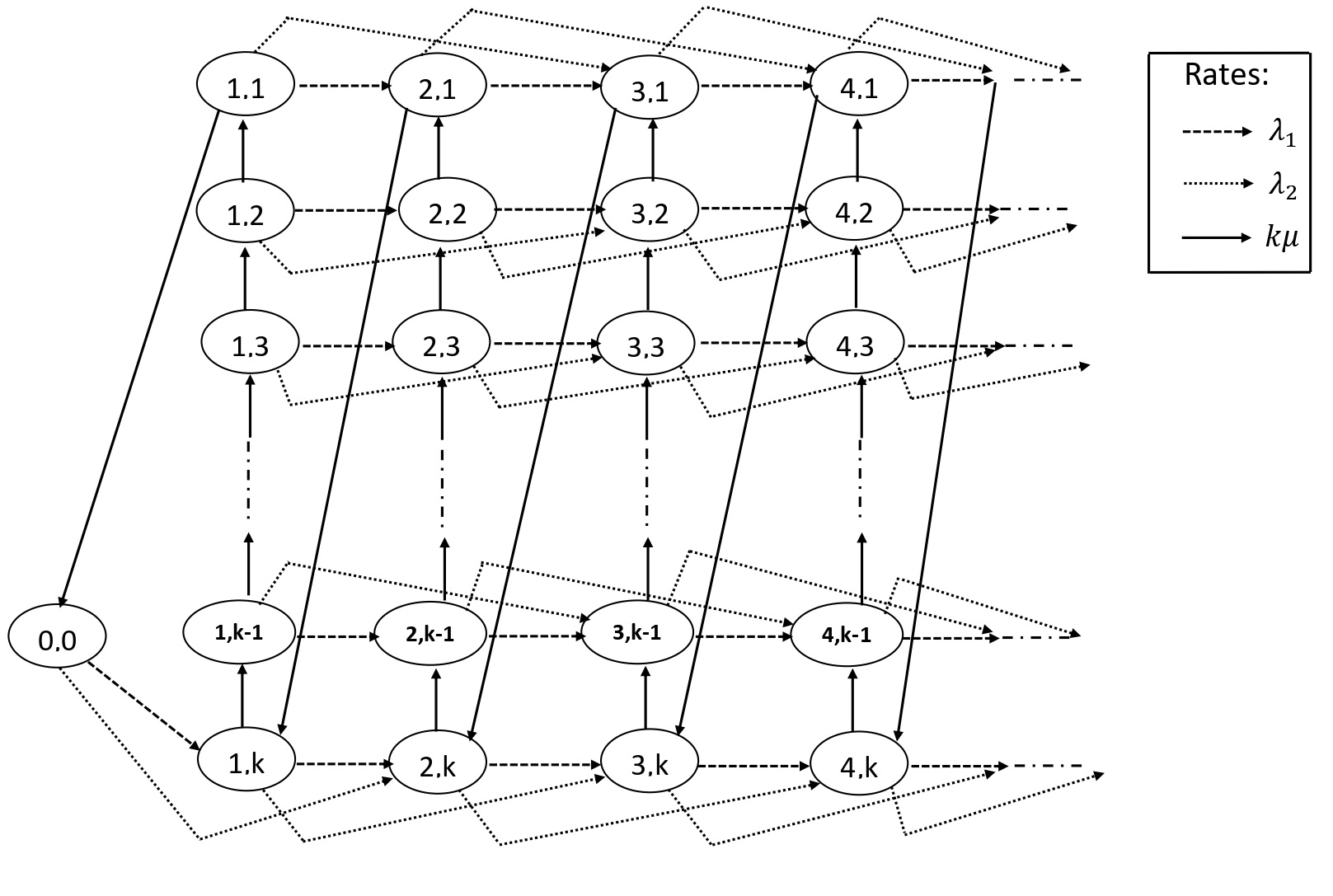}
	\caption{State transition diagram of $\{\mathcal{Q}(t)\}_{t\ge0}$ and $\{\mathcal{Q}^{\theta,\alpha}(t)\}_{t\ge0}$ for $l=2$.}\label{trans}
\end{figure}

In Figure \ref{trans}, the state transition diagram of tempered Erlang queue with two types of arrivals is depicted. From initial state $(0,0)$, transitions to states $(1,k)$ and $(2,k)$ occur with rates $\lambda_1$ and $\lambda_2$, respectively. For $n \geq 1$, transitions occur from $(n,s)$ to $(n+1,s)$ with rate $\lambda_1$ and from $(n,s)$ to $(n+2,s)$ with rate $\lambda_2$. Also, for $1 < s \leq k$, the process moves from $(n,s)$ to $(n,s-1)$ with rate $k\mu$. Moreover, for $n \geq 2$, a transition from $(n,1)$ to $(n-1,k)$ takes place with rate $k\mu$, and from state $(1,1)$ the system returns to initial state $(0,0)$ with  rate $k\mu$. 

First, we obtain the system of fractional differential equations that governs the state probabilities of tempered Erlang queue with multiple arrivals.
\begin{theorem}\label{cptfde}
The state probabilities of $\{\mathcal{Q}^{\theta,\alpha}(t)\}_{t\geq 0}$ solve the following system of fractional difference-differential equations:
\begin{align*}
\partial_t^{\theta,\alpha}p_{0}^{\theta,\alpha}(t)
&=-\Lambda p_{0}^{\theta,\alpha}(t)+k\mu p_{1,1}^{\theta,\alpha}(t),  \\
\partial_t^{\theta,\alpha}p_{1,s}^{\theta,\alpha}(t)
&=-(\Lambda + k \mu)p_{1,s}^{\theta,\alpha}(t)+k\mu p_{1,s+1}^{\theta,\alpha}(t), \, 1 \leq s \leq k-1, \\
\partial_t^{\theta,\alpha}p_{1,k}^{\theta,\alpha}(t)
&=-(\Lambda + k \mu)p_{1,k}^{\theta,\alpha}(t)+k\mu p_{2,1}^{\theta,\alpha}(t)+ \Lambda c_{1}p_{0}^{\theta,\alpha}(t),  \\
\partial_t^{\theta,\alpha}p_{n,s}^{\theta,\alpha}(t)
&=-(\Lambda + k \mu)p_{n,s}^{\theta,\alpha}(t)+k\mu p_{n,s+1}^{\theta,\alpha}(t)+\Lambda \sum_{m=1}^{\text{min}\{n,l\}}c_{m}p_{n-m,s}^{\theta,\alpha}(t), \,  1\leq s \leq k-1,  \, n \geq 2,\\
\partial_t^{\theta,\alpha}p_{n,k}^{\theta,\alpha}(t)
&=-(\Lambda + k \mu)p_{n,k}^{\theta,\alpha}(t)+k\mu p_{n+1,1}^{\theta,\alpha}(t)+\Lambda \sum_{m=1}^{n-1}c_{m}p_{n-m,k}^{\theta,\alpha}(t)+\Lambda c_{n}p_{0}^{\theta,\alpha}(t),\, 2\leq n\leq l, \\
\partial_t^{\theta,\alpha}p_{n,k}^{\theta,\alpha}(t)&=-(\Lambda + k \mu)p_{n,k}^{\theta,\alpha}(t)+k\mu p_{n+1,1}^{\theta,\alpha}(t)+ \Lambda \sum_{m=1}^{l}c_{m}p_{n-m,k}^{\theta,\alpha}(t),\,n >l,
\end{align*}
with $p_{0}^{\theta,\alpha}(0)=1$ and
$p_{n,s}^{\theta,\alpha}(0)=0$, $1\leq s\leq k$, $n\geq 1$, where $\partial_t^{\theta,\alpha}$ is the Caputo tempered fractional derivative as defined in \eqref{caputotemp}.
\end{theorem}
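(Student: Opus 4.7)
The plan is to deduce the system from its non-time-changed counterpart via the subordination representation $\mathcal{Q}^{\theta,\alpha}(t)=\mathcal{Q}(Y_{\theta,\alpha}(t))$ combined with the $t$-Laplace transforms of the density $f_{\theta,\alpha}(t,y)$ in \eqref{lapdeninvtemp} and of the Caputo tempered derivative in \eqref{laplacetempcapu}. By Pote and Kataria (2025), the base probabilities $p_{0}(t)$ and $p_{n,s}(t)$ satisfy precisely the list of equations stated in the theorem but with $\partial_t^{\theta,\alpha}$ replaced by $\mathrm{d}/\mathrm{d}t$. Since $\{\mathcal{Q}(t)\}_{t\ge0}$ and $\{Y_{\theta,\alpha}(t)\}_{t\ge0}$ are independent, conditioning on the inverse subordinator gives
$$p_{n,s}^{\theta,\alpha}(t)=\int_{0}^{\infty}p_{n,s}(y)\,f_{\theta,\alpha}(t,y)\,\mathrm{d}y,$$
and analogously for $p_{0}^{\theta,\alpha}(t)$. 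The initial conditions $p_{0}^{\theta,\alpha}(0)=1$ and $p_{n,s}^{\theta,\alpha}(0)=0$ follow at once from $Y_{\theta,\alpha}(0)=0$.

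Applying the $t$-Laplace transform to the subordination formula and using \eqref{lapdeninvtemp} yields
$$\tilde{p}_{n,s}^{\theta,\alpha}(s)=\frac{\Psi_{\theta,\alpha}(s)}{s}\,\hat{p}_{n,s}\bigl(\Psi_{\theta,\alpha}(s)\bigr),$$
where $\hat{p}$ denotes the Laplace transform in the base time variable, and likewise for $\tilde{p}_{0}^{\theta,\alpha}$. Taking the Laplace transform of any equation of the base system, e.g.\ the first one gives $s\hat{p}_{0}(s)-1=-\Lambda\hat{p}_{0}(s)+k\mu\hat{p}_{1,1}(s)$. Substituting $s\mapsto\Psi_{\theta,\alpha}(s)$ and then multiplying by $\Psi_{\theta,\alpha}(s)/s$ converts the left-hand side into $\Psi_{\theta,\alpha}(s)\tilde{p}_{0}^{\theta,\alpha}(s)-\Psi_{\theta,\alpha}(s)/s$, which by \eqref{laplacetempcapu} and the initial condition equals $\mathcal{L}(\partial_t^{\theta,\alpha}p_{0}^{\theta,\alpha}(t);s)$, while the right-hand side becomes $-\Lambda\tilde{p}_{0}^{\theta,\alpha}(s)+k\mu\tilde{p}_{1,1}^{\theta,\alpha}(s)$. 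Inverting the Laplace transform reproduces the first equation of the stated system.

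The same substitute-and-multiply recipe applied in parallel to the remaining five equations of the base system reproduces them term by term, because the transition coefficients $-(\Lambda+k\mu)$, $k\mu$ and the multiple-arrival convolution sums $\Lambda\sum_{m}c_{m}p_{n-m,\cdot}^{\theta,\alpha}(t)$ are all linear in the base probabilities and therefore pull through the substitution $s\mapsto\Psi_{\theta,\alpha}(s)$ unchanged. The Laplace inversion at each step is legitimate since both sides are Laplace transforms of bounded continuous functions of $t$. The main obstacle is not conceptual but purely bookkeeping: one must carefully check that the recipe reproduces each of the six equations verbatim, in particular the three involving the finite convolution sums with the coefficients $c_{m}$ induced by the multiple-arrival mechanism; once the first case is in hand, the remaining ones follow by the identical argument.
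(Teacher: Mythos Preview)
Your argument is correct and rests on the same mechanism as the paper---subordination plus the Laplace identities \eqref{lapdeninvtemp} and \eqref{laplacetempcapu}---but the execution differs. The paper first observes that the system of difference-differential equations is equivalent to a single Cauchy problem for the pgf $G^{\theta,\alpha}(u,t)$ (equation \eqref{cachyeq}), then verifies that pgf equation in the Laplace domain by substituting the subordination representations \eqref{p0nul}, \eqref{Gxtnul} and reducing to the known base-model pgf equation \eqref{cauchyErlang}. You instead work equation by equation at the level of the state probabilities themselves, using the composition rule $\tilde{p}_{n,s}^{\theta,\alpha}(s)=\tfrac{\Psi_{\theta,\alpha}(s)}{s}\hat{p}_{n,s}(\Psi_{\theta,\alpha}(s))$ together with the Laplace transform of each base equation. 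Your route is slightly more elementary since it avoids the algebraic equivalence between the system and the pgf equation; the paper's route is more compact, handling all six cases in one stroke through the generating function. Both arguments exploit the same key identity, namely that the substitution $s\mapsto\Psi_{\theta,\alpha}(s)$ followed by multiplication by $\Psi_{\theta,\alpha}(s)/s$ converts $s\hat{f}(s)-f(0)$ into $\Psi_{\theta,\alpha}(s)\tilde{f}^{\theta,\alpha}(s)-\tfrac{\Psi_{\theta,\alpha}(s)}{s}f(0)$, which is exactly the Laplace symbol of $\partial_t^{\theta,\alpha}$.
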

\begin{proof}		
For $|u|\le1$, let 
\begin{equation}\label{pgtemp}
G^{\theta,\alpha}(u,t)=p_{0}^{\theta,\alpha}(t)+\sum_{n=1}^{\infty} \sum_{s=1}^{k}u^{(n-1)k+s}p_{n,s}^{\theta,\alpha}(t), \ t\geq0,
\end{equation}
be the pgf of $\{\mathcal{Q}^{\theta,\alpha}(t)\}_{t\geq 0}$.
Also, for $(n,s) \in S$, we have
\begin{align}\label{pnsalphat}
p_{n,s}^{\theta,\alpha}(t)
&=\mathrm{Pr}(\mathcal{Q}(Y_{\theta,\alpha}(t))=(n,s)|\mathcal{Q}(0)=(0,0))\nonumber\\
&=\int_{0}^{\infty}\mathrm{Pr}(\mathcal{Q}(y)=(n,s)|\mathcal{Q}(0)=(0,0))\mathrm{Pr}(Y_{\theta,\alpha}(t)\in\mathrm{d}y)\nonumber\\
&=\int_{0}^{\infty}p_{n,s}(y)\mathrm{Pr}(Y_{\theta,\alpha}(t)\in\mathrm{d}y)
\end{align}
and
\begin{equation}\label{p0mu}
{p}_{0}^{\theta,\alpha}(t)=\int_{0}^{\infty}p_{0}(y)\mathrm{Pr}(Y_{\theta,\alpha}(t)\in\mathrm{d}y),
\end{equation}
where $p_{n,s}(y)$ and $p_{0}(y)$ are the transient \textit{state-phase} probability and zero \textit{state-phase} probability of Erlang queue with multiple arrivals, respectively.

On substituting \eqref{pnsalphat} and \eqref{p0mu} in \eqref{pgtemp}, we get
\begin{equation}\label{Gxtmu}
{G}^{\theta,\alpha}(u,t)=\int_{0}^{\infty}G(u,y)\mathrm{Pr}(Y_{\theta,\alpha}(t)\in\mathrm{d}y).
\end{equation}
By using \eqref{lapdeninvtemp}, the Laplace transforms of \eqref{p0mu} and \eqref{Gxtmu} are given by
\begin{equation}\label{p0nul}
\Tilde{p}_{0}^{\theta,\alpha}(z)=\frac{\psi_{\theta,\alpha}(z)}{z}\int_{0}^{\infty}p_{0}(y)e^{-y\psi_{\theta,\alpha}(z)}\mathrm{d}y
\end{equation}
and
\begin{equation}\label{Gxtnul}
\Tilde{G}^{\theta,\alpha}(u,z)=\frac{\psi_{\theta,\alpha}(z)}{z}\int_{0}^{\infty}G(u,y)e^{-y\psi_{\theta,\alpha}(z)}\mathrm{d}y,
\end{equation}
respectively.
		
By multiplying the first equation in the system of difference-differential equations given in Theorem \ref{cptfde} by $u$, the second equation by $u^{s+1}$, the third equation by $u^{k+1}$, the fourth by $u^{k(n-1)+s+1}$, the fifth and sixth by $u^{nk+1}$, and then summing all these equations over the range of $s=1,2,\dots,k$ and $n\ge1$, it can be established that the state probabilities of $\{\mathcal{Q}^{\theta,\alpha}(t)\}_{t\geq0}$ satisfy the system of equations in Theorem \ref{cptfde} if and only if its probability generating function (pgf) solves
\begin{equation}\label{cachyeq}
u\partial_t^{\theta,\alpha}G^{\theta,\alpha}(u,t)=\Big(k\mu(1-u)-\Lambda u\Big(1-\sum_{m=1}^{l}c_{m}u^{mk}\Big)\Big)G^{\theta,\alpha}(u,t)-k\mu(1-u)p_{0}^{\theta,\alpha}(t),
\end{equation}
with $G^{\theta,\alpha}(u,0)=1$.
		  
By taking the Laplace transform on both sides of \eqref{cachyeq} and using \eqref{laplacetempcapu}, we obtain
\begin{equation}\label{cauchyLL}
\Big(u\psi_{\theta,\alpha}(z)-k\mu(1-u)+\Lambda u\Big(1-\sum_{m=1}^{l}c_{m}u^{mk}\Big)\Big)\tilde{G}^{\theta,\alpha}(u,z)=\frac{u}{z}\psi_{\theta,\alpha}(z)-k\mu(1-u)\tilde{p}_0^{\theta,\alpha}(z).
\end{equation}
Now, by using \eqref{p0nul} and \eqref{Gxtnul} in \eqref{cauchyLL}, we obtain {\footnotesize\begin{equation}\label{lptcdee}
\int_{0}^{\infty}\Big(\Big(u\psi_{\theta,\alpha}(z)-k\mu(1-u)+\Lambda u\Big(1-\sum_{m=1}^{l}c_{m}u^{mk}\Big)\Big)G(u,y)+k\mu(1-u)p_{0}(y)\Big)\frac{\psi_{\theta,\alpha}(z)}{z}e^{-y\psi_{\theta,\alpha}(z)}\mathrm{d}y=\frac{u}{z}\psi_{\theta,\alpha}(z).
\end{equation}}
On using \eqref{cauchyErlangq} in \eqref{lptcdee}, we obtain the following identity: 
\begin{equation*}
\int_{0}^{\infty}\Big(u\psi_{\theta,\alpha}(z)G(u,y)-u \frac{\partial }{\partial y}G(u,y)\Big)\frac{\psi_{\theta,\alpha}(z)}{z}e^{-y\psi_{\theta,\alpha}(z)}\mathrm{d}y=\frac{u}{z}\psi_{\theta,\alpha}(z).
\end{equation*}
Thus, the required result holds true.
\end{proof}
\begin{remark}
On substituting $l=1$ in Theorem \ref{cptfde}, we obtain governing system of difference-differential equations of the state probabilities of Erlang queue time-changed with an independent inverse tempered $\alpha$-stable subordinator. Its explicit expression is given in \eqref{deertemp}. 
\end{remark} 
Next, we obtain the probability that there are no customers in the system at time $t$. 
\begin{theorem}\label{theorem4.2}
The zero state probability $p_{0}^{\theta,\alpha}(t)$ of $\{\mathcal{Q}^{\theta,\alpha}(t)\}_{t\ge0}$ is given by
\begin{align*}
p_{0}^{\theta,\alpha}(t)&=\sum_{h=1}^{\infty}\sum_{n=0}^{\infty}\sum_{\substack{\sum_{j=1}^{l}m_{j}=n\\m_{j}\geq0}}\sum_{m=0}^{\infty}C_{h,n}^{0}(\textbf{m})e^{-\theta t}\theta^mt^{\beta_{h,n}^{0}(\textbf{m})+m-1}\\
&\hspace{4cm} \cdot\Big(E_{\alpha,\beta_{h,n}^{0}(\textbf{m})+m}^{\gamma_{h,n}^{0}(\textbf{m})}(\beta t^{\alpha})-(\theta t)^\alpha E_{\alpha,\beta_{h,n}^{0}(\textbf{m})+\alpha+m}^{\gamma_{h,n}^{0}(\textbf{m})}(\beta t^{\alpha})\Big),
\end{align*}
where $E_{\alpha,\beta}^{\gamma}(\cdot)$ is the three parameter Mittag-Leffler function defined in \eqref{mitag}.
\end{theorem}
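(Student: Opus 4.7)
The plan is to start from the subordination identity \eqref{p0nul}, namely $\tilde p_0^{\theta,\alpha}(z)=(\psi_{\theta,\alpha}(z)/z)\,\tilde p_0(\psi_{\theta,\alpha}(z))$, and to recover $p_{0}^{\theta,\alpha}(t)$ by inverting this Laplace transform term by term against the three-parameter Mittag-Leffler function. The first step is to produce an explicit series representation of the classical Laplace transform $\tilde p_{0}(s)$ of the zero-state probability for the Erlang queue with multiple arrivals, of the form
\[
\tilde p_{0}(s)=\sum_{h=1}^{\infty}\sum_{n=0}^{\infty}\sum_{\substack{\sum_{j}m_{j}=n\\ m_{j}\geq 0}}\frac{C_{h,n}^{0}(\textbf{m})}{(s+\Lambda+k\mu)^{\gamma_{h,n}^{0}(\textbf{m})}}.
\]
This representation is extracted from the Cauchy problem \eqref{cauchyErlang} (at the value of $u$ that isolates $p_{0}$) by multinomially expanding $1-\sum_{m}c_{m}u^{mk}$ and iteratively solving the resulting first-order linear ODE, following the scheme in Pote and Kataria (2025). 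The exponents $\gamma_{h,n}^{0}(\textbf{m})$ and prefactors $C_{h,n}^{0}(\textbf{m})$ in \eqref{notaquea} are exactly the coefficients that emerge from this expansion.

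Substituting $s=\psi_{\theta,\alpha}(z)=(z+\theta)^{\alpha}-\theta^{\alpha}$ and using $\beta=\theta^{\alpha}-\Lambda-k\mu$ converts every denominator into $((z+\theta)^{\alpha}-\beta)^{\gamma_{h,n}^{0}(\textbf{m})}$, giving
\[
\tilde p_{0}^{\theta,\alpha}(z)=\sum_{h,n,\textbf{m}}C_{h,n}^{0}(\textbf{m})\,\frac{\psi_{\theta,\alpha}(z)}{z\bigl((z+\theta)^{\alpha}-\beta\bigr)^{\gamma_{h,n}^{0}(\textbf{m})}}.
\]
I then expand the factor $1/z$ as the geometric series $1/z=\sum_{m\geq 0}\theta^{m}/(z+\theta)^{m+1}$, valid for $|z+\theta|>\theta$, and combine it with $\psi_{\theta,\alpha}(z)=(z+\theta)^{\alpha}-\theta^{\alpha}$ to write each summand as the difference of two terms of the form $(z+\theta)^{\alpha\gamma-B}/((z+\theta)^{\alpha}-\beta)^{\gamma}$, one with $B=\beta_{h,n}^{0}(\textbf{m})+m$ and the other with $B=\beta_{h,n}^{0}(\textbf{m})+m+\alpha$.

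The term-by-term inversion is then immediate from \eqref{mi} together with the shift rule $\mathcal L^{-1}[\tilde f(z+\theta)](t)=e^{-\theta t}\mathcal L^{-1}[\tilde f](t)$. The key identity that makes the exponents fit is $\beta_{h,n}^{0}(\textbf{m})=\alpha(\gamma_{h,n}^{0}(\textbf{m})-1)+1$, which forces $\alpha\gamma_{h,n}^{0}(\textbf{m})-B$ to be $\alpha-m-1$ in the first piece and $-m-1$ in the second. The outer $\theta^{m}$ weight comes from the geometric series, the $e^{-\theta t}$ arises from the shift, and the $(\theta t)^{\alpha}=\theta^{\alpha}t^{\alpha}$ correction is exactly the contribution of the $-\theta^{\alpha}$ piece of $\psi_{\theta,\alpha}(z)$, yielding the bracketed difference in the statement.

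The main obstacle is the first step: establishing the explicit multinomial-indexed series representation of $\tilde p_{0}(s)$ with the precise coefficients $C_{h,n}^{0}(\textbf{m})$ and exponents $\gamma_{h,n}^{0}(\textbf{m})$ of \eqref{notaquea}. Once this is in hand, the remainder of the proof is a direct Laplace computation using only \eqref{mi} and the geometric expansion above; the interchange of summation with inverse Laplace transform is justified by the absolute convergence of the Mittag-Leffler series and the exponential decay $e^{-\theta t}$ on compact $t$-intervals.
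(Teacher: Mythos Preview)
Your proposal is correct and follows the same overall scheme as the paper: start from the subordination identity \eqref{p0nul}, insert the Pote--Kataria series for the classical zero-state probability so that every denominator becomes $((z+\theta)^{\alpha}-\beta)^{\gamma_{h,n}^{0}(\textbf{m})}$, and invert term by term against \eqref{mi} after a shift by $\theta$. The one genuine difference is how the stray factor $1/z$ is handled. The paper first inverts the shifted Mittag-Leffler piece, then treats $1/z$ as time-integration $\int_{0}^{t}(\cdot)\,\mathrm{d}y$, expands $e^{-\theta y}=e^{-\theta t}\sum_{m}\theta^{m}(t-y)^{m}/m!$, and closes the convolution with the Kilbas--Saigo identity $\int_{0}^{t}y^{\mu-1}E_{\rho,\mu}^{\beta}(wy^{\rho})(t-y)^{\nu-1}\,\mathrm{d}y=\Gamma(\nu)\,t^{\nu+\mu-1}E_{\rho,\mu+\nu}^{\beta}(wt^{\rho})$. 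You instead expand $1/z=\sum_{m\ge0}\theta^{m}(z+\theta)^{-m-1}$ directly in the Laplace domain and invert each resulting term in one shot via \eqref{mi}. Both routes produce exactly the same double series; yours is a bit more economical since it avoids the auxiliary convolution identity, while the paper's route has the advantage that the interchange of $\sum_{m}$ with the inverse transform is replaced by a single time-integration followed by a series expansion of a known analytic function.
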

\begin{proof}
From \eqref{p0nul}, we have 
{\small\begin{align}
\Tilde{p}_{0}^{\theta,\alpha}(z)
&=\sum_{h=1}^{\infty}\sum_{n=0}^{\infty}\sum_{\substack{\sum_{j=1}^{l}m_{j}=n\\m_{j}\geq0}}h\Lambda^{n}\Big(\prod_{i=1}^{l}\frac{c_{i}^{m_{i}}}{m_{i}!}\Big)\frac{(k\mu)^{h-1+k\sum_{j=1}^{l}jm_{j}}}{(h+k\sum_{j=1}^{l}jm_{j})!}\frac{\psi_{\theta,\alpha}(z)}{z}\nonumber\\
&\hspace{6cm}\cdot\int_{0}^{\infty}y^{h+n+k\sum_{j=1}^{l}jm_{j}-1}e^{-(\Lambda+k\mu+\Psi_{\theta,\alpha}(z))y}\, \mathrm{d}y\nonumber\\
&=\sum_{h=1}^{\infty}\sum_{n=0}^{\infty}\sum_{\substack{\sum_{j=1}^{l}m_{j}=n\\m_{j}\geq0}}C_{h,n}^{0}(\textbf{m})\frac{z^{-1}\psi_{\theta,\alpha}(z)}{(\Lambda+k\mu+\psi_{\theta,\alpha}(z))^{h+n+k\sum_{j=1}^{l}jm_{j}}}\nonumber\\
&=\sum_{h=1}^{\infty}\sum_{n=0}^{\infty}\sum_{\substack{\sum_{j=1}^{l}m_{j}=n\\m_{j}\geq0}}\frac{C_{h,n}^{0}(\textbf{m})}{z}\bigg(\frac{(z+\theta)^\alpha}{((z+\theta)^\alpha-\beta)^{\gamma^0_{h,n}(\textbf{m})}}-\frac{\theta^\alpha}{((z+\theta)^\alpha-\beta)^{\gamma^0_{h,n}(\textbf{m})}}\bigg)\label{p0alpha4},
\end{align}	}
where the first equality follows on using Eq. (15) of Pote and Kataria (2025). Let $F(z)=(z+\theta)^\alpha((z+\theta)^\alpha-\beta)^{-\gamma^0_{h,n}(\textbf{m})}$
and $G(z)=\theta^\alpha((z+\theta)^\alpha-\beta)^{-\gamma^0_{h,n}(\textbf{m})}$. Then, the inverse Laplace transform of $F(z)$ and $G(z)$ follows from \eqref{mi}, and the shifting property of Laplace transform, that is, $F'(z)=F(z+\theta)$ and $G'(z)=G(z+\theta)$ leads to the inverse Laplace transform of $F'(z)$ and $G'(z)$. It is given by
\begin{align*}
\mathcal{L}^{-1}(F'(z))&=e^{-\theta t}\mathcal{L}^{-1}(F(z))\\
&=e^{-\theta t}t^{\alpha(\gamma^0_{h,n}(\textbf{m})-1)-1}E_{\alpha,\alpha(\gamma^0_{h,n}(\textbf{m})-1)}^{\gamma^0_{h,n}(\textbf{m})}(\beta t^\alpha)\\
&=e^{-\theta t}t^{B^0_{h,n}(\textbf{m})-2}E_{\alpha,B^0_{h,n}(\textbf{m})-1}^{\gamma^0_{h,n}(\textbf{m})}(\beta t^\alpha)
\end{align*}
and 
\begin{align*}
\mathcal{L}^{-1}(G'(z))&=e^{-\theta t}\mathcal{L}^{-1}(G(z))\\
&=\theta^\alpha e^{-\theta t}t^{\alpha\gamma^0_{h,n}(\textbf{m})-1}E_{\alpha,\alpha\gamma^0_{h,n}(\textbf{m})}^{\gamma^0_{h,n}(\textbf{m})}(\beta t^\alpha)\\
&=\theta^\alpha e^{-\theta t}t^{B^0_{h,n}(\textbf{m})+\alpha-2}E_{\alpha,B^0_{h,n}(\textbf{m})+\alpha-1}^{\gamma^0_{h,n}(\textbf{m})}(\beta t^\alpha),
\end{align*}
respectively. Further,
\begin{align}
\mathcal{L}^{-1}\Big(\frac{F'(z)}{z}\Big)&=\int_{0}^{t}e^{-\theta y}y^{B^0_{h,n}(\textbf{m})-2}E_{\alpha,B^0_{h,n}(\textbf{m})-1}^{\gamma^0_{h,n}(\textbf{m})}(\beta y^\alpha)\ \mathrm{d}y\nonumber\\
&=e^{-\theta t}\sum_{m=0}^{\infty}\frac{\theta^m}{m!}\int_{0}^{t}(t-y)^my^{B^0_{h,n}(\textbf{m})-2}E_{\alpha,B^0_{h,n}(\textbf{m})-1}^{\gamma^0_{h,n}(\textbf{m})}(\beta y^\alpha)\ \mathrm{d}y\nonumber\\
&=e^{-\theta t}\sum_{m=0}^{\infty}\theta^m t^{B^0_{h,n}(\textbf{m})+m-1}E_{\alpha,B^0_{h,n}(\textbf{m})+m}^{\gamma^0_{h,n}(\textbf{m})}(\beta t^\alpha),\label{inlshi1}
\end{align}
where in the last step, we have used the following result (see Kilbas \textit{et al.} (2004)): 
\begin{equation*}
\int_{0}^{t}y^{\mu -1}E_{\rho,\mu}^{\beta}(wy^\rho)(t-y)^{\nu-1}\ \mathrm{d}y=\Gamma(\nu)t^{\nu+\mu-1}E_{\rho,\mu+\nu}^{\beta}(wt^\rho).
\end{equation*}
Similarly, we have
\begin{equation}\label{inlshi2}
\mathcal{L}^{-1}\Big(\frac{G'(z)}{z}\Big)
=\theta^\alpha e^{-\theta t}\sum_{m=0}^{\infty}\theta^m t^{B^0_{h,n}(\textbf{m})+\alpha+m-1}E_{\alpha,B^0_{h,n}(\textbf{m})+\alpha+m}^{\gamma^0_{h,n}(\textbf{m})}(\beta t^\alpha).
\end{equation}
Finally, on taking the inverse Laplace transform of \eqref{p0alpha4}, and using \eqref{inlshi1} and \eqref{inlshi2}, we get the required result.
\end{proof}
\begin{remark}
On taking $l=1$ in Theorem \ref{theorem4.2}, we get the corresponding result for Erlang queue time-changed by an independent inverse tempered stable subordinator. Its explicit expression is given in \eqref{ertempzero}.
\end{remark}

\begin{lemma}\label{lemma4.1}
Let $p_{0}(t)$ be the zero {\it state-phase} probability of Erlang queue with multiple arrivals. Then, 
{\footnotesize\begin{equation*}
\int_{0}^{\infty}\int_{0}^{s}p_{0}(t)(s-t)^{r}e^{-(\Lambda+k\mu)(s-t)}e^{-s\psi_{\theta,\alpha}(z)}\,\mathrm{d}t\,\mathrm{d}s=\sum_{h=1}^{\infty}\sum_{w=0}^{\infty}\sum_{\substack{\sum_{j=1}^{l}m_{j}'=w\\m_{j}'\geq0}}\frac{C_{h,w}^{0}(\textbf{m}')\Gamma(r+1)}{(\Lambda+k\mu+\psi_{\theta,\alpha}(z))^{\gamma_{h,w}^{0}(\textbf{m}')+r+1}},
\end{equation*}}
where $r\in\mathbb{N}_0$.
\end{lemma}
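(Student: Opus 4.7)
The plan is to apply Tonelli's theorem and the substitution $u = s-t$ to decouple the double integral into a product of two single integrals, one of which has already been computed in the proof of Theorem \ref{theorem4.2}. Under the change of variables the region $\{(s,t):\,s\ge 0,\,0\le t\le s\}$ becomes $\{(u,t):\,u\ge 0,\,t\ge 0\}$, and the integrand factorises cleanly:
\begin{equation*}
p_{0}(t)(s-t)^{r}e^{-(\theta+k\mu)(s-t)}e^{-s\psi_{\theta,\alpha}(z)}
= \bigl(p_{0}(t)e^{-t\psi_{\theta,\alpha}(z)}\bigr)\bigl(u^{r}e^{-(\theta+k\mu+\psi_{\theta,\alpha}(z))u}\bigr).
\end{equation*}
Since every factor is non-negative for $z>0$, Tonelli's theorem legitimises the swap of order and the splitting of the integral as a product $I = A\cdot B$, with
\begin{equation*}
A = \int_{0}^{\infty}p_{0}(t)e^{-t\psi_{\theta,\alpha}(z)}\,\mathrm{d}t,\qquad
B = \int_{0}^{\infty}u^{r}e^{-(\theta+k\mu+\psi_{\theta,\alpha}(z))u}\,\mathrm{d}u.
\end{equation*}

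The second factor is an elementary gamma integral giving $B = \Gamma(r+1)/(\theta+k\mu+\psi_{\theta,\alpha}(z))^{r+1}$. The first factor is exactly the Laplace-like quantity that appeared inside \eqref{p0nul} in the proof of Theorem \ref{theorem4.2}: indeed $\tilde p_{0}^{\theta,\alpha}(z) = (\psi_{\theta,\alpha}(z)/z)\,A$. Feeding in Eq.\,(2.15) of Pote and Kataria (2025) for $p_{0}(t)$ (as was done to obtain \eqref{p0alpha4}) and integrating term-by-term — again justified by Tonelli since each summand is a positive constant times a gamma density — delivers
\begin{equation*}
A \;=\; \sum_{h=1}^{\infty}\sum_{w=0}^{\infty}\sum_{\substack{\sum_{j=1}^{l}m'_{j}=w\\ m'_{j}\ge 0}} \frac{C_{h,w}^{0}(\mathbf{m}')}{(\theta+k\mu+\psi_{\theta,\alpha}(z))^{\gamma_{h,w}^{0}(\mathbf{m}')}}.
\end{equation*}

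Multiplying $A$ and $B$ yields the stated identity, the exponents in the denominator combining as $\gamma_{h,w}^{0}(\mathbf{m}')+r+1$. There is no serious obstacle: the only point to be careful about is keeping track of the two Tonelli applications (one to swap the $(s,t)$ order, one to bring the sums through the $t$-integral), both of which are unproblematic because every integrand and series term is non-negative. The heart of the computation — evaluating $A$ — is essentially quoted from the preceding theorem, so the lemma is really a corollary of the bookkeeping already done there together with one elementary gamma integral.
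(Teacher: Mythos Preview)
Your proposal is correct and is precisely the approach the paper has in mind: the paper omits the proof, saying only that ``on using \eqref{p0nul}, the proof follows similar lines to that of Lemma 3.1 of Pote and Kataria (2025)'', and that lemma is proved by exactly the Fubini/change-of-variable decoupling of the convolution into the product $A\cdot B$ that you carry out, with $A$ read off from the same series computation as in the proof of Theorem~\ref{theorem4.2} and $B$ the elementary gamma integral.
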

\begin{proof}
On using \eqref{p0nul}, the proof follows similar lines to that of Lemma 2 of Pote and Kataria (2025). Hence, it is omitted.
\end{proof}

\begin{theorem}\label{thm4.3}
For $n\geq1$ and $1\leq s\leq k$, the state probabilities of $\{\mathcal{Q}^{\theta, \alpha}(t)\}_{t\geq0}$ are given by  		
{\footnotesize \begin{align*}
p_{n,s}^{\theta,\alpha}(t)&=\sum_{\substack{m_{j},r\geq0\\\sum_{j=1}^{l}jm_{j} -r=n}}A_{r}^{n,s}(\textbf{m})\sum_{a=0}^{\infty}e^{-\theta t}\theta^a t^{\pi_{r}^{n,s}(\textbf{m})+a-1}\Big(E_{\alpha,\pi_{r}^{n,s}(\textbf{m})+a}^{a_{r}^{n,s}(\textbf{m})}(\beta t^{\alpha})-(\theta t)^\alpha E_{\alpha,\pi_{r}^{n,s}(\textbf{m})+\alpha +a}^{a_{r}^{n,s}(\textbf{m})}(\beta t^{\alpha})\Big)\\
 &\ \ +\sum_{\substack{m_{j},r\geq0\\\sum_{j=1}^{l}jm_{j} -r=n}}\sum_{h=1}^{\infty}\sum_{w=0}^{\infty}\sum_{\substack{\sum_{j=1}^{l}m'_{j}=w\\m'_{j}\geq0}}B_{r,h,w}^{n,s}(\textbf{m},\textbf{m}')\sum_{b=0}^{\infty}e^{-\theta t}\theta^bt^{\rho_{r,h,w}^{n,s}(\textbf{m},\textbf{m}')+b-1}\Big(E_{\alpha,\rho_{r,h,w}^{n,s}(\textbf{m},\textbf{m}')+b}^{b_{r,h,w}^{n,s}(\textbf{m},\textbf{m}')}(\beta t^{\alpha})\\
 &\ \ -(\theta t)^\alpha E_{\alpha,\rho_{r,h,w}^{n,s}(\textbf{m},\textbf{m}')+\alpha +b}^{b_{r,h,w}^{n,s}(\textbf{m},\textbf{m}')}(\beta t^{\alpha})\Big) -\sum_{\substack{m_{j},r\geq0\\ \sum_{j=1}^{l}jm_{j} -r=n}}\sum_{h=1}^{\infty}\sum_{w=0}^{\infty}\sum_{\substack{\sum_{j=1}^{l}m'_{j}=w\\m'_{j}\geq0}}C_{r,h,w}^{n,s}(\textbf{m},\textbf{m}')\sum_{c=0}^{\infty}e^{-\theta t}\theta^c\\
 &\hspace{2cm} \cdot t^{\delta_{r,h,w}^{n,s}(\textbf{m},\textbf{m}')+c-1}\Big(E_{\alpha,\delta_{r,h,w}^{n,s}(\textbf{m},\textbf{m}')+c}^{c_{r,h,w}^{n,s}(\textbf{m},\textbf{m}')}(\beta t^{\alpha})-(\theta t)^\alpha E_{\alpha,\delta_{r,h,w}^{n,s}(\textbf{m},\textbf{m}')+\alpha +c}^{c_{r,h,w}^{n,s}(\textbf{m},\textbf{m}')}(\beta t^{\alpha}) \Big),
 \end{align*}}
where $E_{\alpha,\beta}^{\gamma}(\cdot)$ is the three parameter Mittag-Leffler function defined in \eqref{mitag}.
\end{theorem}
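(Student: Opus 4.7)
My approach parallels the proof of Theorem \ref{theorem4.2}, now carrying along the phase index $s$ and the multiple-arrival indices. Starting from the subordination identity \eqref{pnsalphat} and using \eqref{lapdeninvtemp}, I would first write
\[
\tilde{p}_{n,s}^{\theta,\alpha}(z) \;=\; \frac{\psi_{\theta,\alpha}(z)}{z}\int_{0}^{\infty}p_{n,s}(y)\,e^{-y\psi_{\theta,\alpha}(z)}\,\mathrm{d}y,
\]
then insert the closed-form expression for $p_{n,s}(y)$ from Pote and Kataria (2025), evaluate the $y$-integrals in closed form, and finally invert in $z$ by the same Mittag-Leffler/shift technique that produced $p_{0}^{\theta,\alpha}(t)$.

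The formula of Pote and Kataria for $p_{n,s}(y)$ decomposes into three contributions reflecting the right-hand sides of the governing equations in Theorem \ref{cptfde}: a ``free'' piece proportional to $y^{a_{r}^{n,s}(\textbf{m})-1}e^{-(\Lambda+k\mu)y}$, a convolution of $p_{0}(y)$ against an exponential-polynomial kernel of order $k(r+1)-s$ (coming from the $\Lambda c_{n}\,p_{0}$ forcing), and a similar $p_{0}$-convolution of order $k(r+1)-s-1$ or $k(r+1)-1$ (from the $k\mu\,p_{n,s+1}$ phase-shift term, with the case split $s<k$ versus $s=k$ encoded by $C_{r,h,w}^{n,s}$ and $c_{r,h,w}^{n,s}$ in \eqref{notaquea}). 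The free piece integrates directly against $e^{-y\psi_{\theta,\alpha}(z)}$ to $A_{r}^{n,s}(\textbf{m})/((z+\theta)^{\alpha}-\beta)^{a_{r}^{n,s}(\textbf{m})}$ via $\Lambda+k\mu+\psi_{\theta,\alpha}(z)=(z+\theta)^{\alpha}-\beta$, while the two $p_{0}$-convolutions are handled directly by Lemma \ref{lemma4.1}, producing the triple sums over $h,w,\textbf{m}'$ with ``combined'' exponents $b_{r,h,w}^{n,s}=a_{r}^{n,s}(\textbf{m})+\gamma_{h,w}^{0}(\textbf{m}')$ and $c_{r,h,w}^{n,s}$, and prefactors $B_{r,h,w}^{n,s}$ and $C_{r,h,w}^{n,s}$ respectively.

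Each resulting term in $\tilde p_{n,s}^{\theta,\alpha}(z)$ then has the shape
\[
\frac{\psi_{\theta,\alpha}(z)}{z\bigl((z+\theta)^{\alpha}-\beta\bigr)^{N}} \;=\; \frac{(z+\theta)^{\alpha}}{z\bigl((z+\theta)^{\alpha}-\beta\bigr)^{N}} \;-\; \frac{\theta^{\alpha}}{z\bigl((z+\theta)^{\alpha}-\beta\bigr)^{N}},
\]
with $N\in\{a_{r}^{n,s}(\textbf{m}),\;b_{r,h,w}^{n,s}(\textbf{m},\textbf{m}'),\;c_{r,h,w}^{n,s}(\textbf{m},\textbf{m}')\}$. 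Exactly as in the proof of Theorem \ref{theorem4.2}, each of these is inverted by combining the Mittag-Leffler Laplace formula \eqref{mi}, the shift rule $\mathcal{L}^{-1}(F(z+\theta))(t)=e^{-\theta t}\mathcal{L}^{-1}(F(z))(t)$, and the convolution identity underlying \eqref{inlshi1}--\eqref{inlshi2}; the inner series in $\theta^{a}$, $\theta^{b}$, $\theta^{c}$ are generated by expanding the $1/z$ factor as convolution with $1$, and the two Mittag-Leffler parameters $\pi_{r}^{n,s}$ versus $\pi_{r}^{n,s}+\alpha$ (and analogously for $\rho$ and $\delta$) emerge from the $(z+\theta)^{\alpha}$ and $\theta^{\alpha}$ numerators respectively.

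I expect the main obstacle to be combinatorial bookkeeping rather than any new analytic step: one must correctly separate the three families of terms generated by the three forcing structures in Theorem \ref{cptfde}, handle the case-split $s<k$ versus $s=k$ through $C_{r,h,w}^{n,s}$ and $c_{r,h,w}^{n,s}$, and verify that the two exponent choices coming from the $(z+\theta)^{\alpha}$ and $\theta^{\alpha}$ pieces land at the correctly shifted Mittag-Leffler indices in the final triple/quadruple sums.
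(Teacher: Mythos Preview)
Your proposal is correct and follows essentially the same route as the paper: start from the Laplace transform of \eqref{pnsalphat} via \eqref{lapdeninvtemp}, insert the Pote--Kataria expressions for $p_{n,s}(y)$ (their Eqs.\ (2.17) for $1\le s\le k-1$ and (2.18) for $s=k$), evaluate the free piece directly and the two $p_{0}$-convolution pieces via Lemma \ref{lemma4.1}, and then invert each $\psi_{\theta,\alpha}(z)z^{-1}((z+\theta)^{\alpha}-\beta)^{-N}$ block exactly as in \eqref{inlshi1}--\eqref{inlshi2}. The only cosmetic difference is that the paper does not interpret the three pieces as arising from the separate forcing terms in Theorem \ref{cptfde}; it simply quotes the Pote--Kataria formulas and proceeds, but the computation is identical to what you describe.
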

\begin{proof}
On taking the Laplace transform of \eqref{pnsalphat}, we get 
\begin{equation}\label{lppnst}
\Tilde{p}_{n,s}^{\theta,\alpha}(z)=\frac{\Psi_{\theta,\alpha}(z)}{z}\int_{0}^{\infty}p_{n,s}(y)e^{-y\Psi_{\theta,\alpha}(z)}\mathrm{d}y,
\end{equation}
where we have used \eqref{lapdeninvtemp}.
For $1\leq s\leq k-1$, by using Eq. (17) of Pote and Kataria (2025), we have
{\scriptsize	
\begin{align}
\Tilde{p}_{n,s}^{\theta,\alpha}(z)
&=\sum_{\substack{m_{j},r\geq0\\\sum_{j=1}^{l}jm_{j} -r=n}}\frac{\Psi_{\theta,\alpha}(z)}{z}\Big(\prod_{i=1}^{l}\frac{c_{i}^{m_{i}}}{m_{i}!}\Big)\frac{\Lambda^{\sum_{j=1}^{l}m_{j}}(k\mu )^{k(r+1)-s}}{(k(r+1)-s)!} \Big(\int_{0}^{\infty}y^{\sum_{j=1}^{l}m_{j}+k(r+1)-s}e^{-(\Lambda+k\mu+\Psi_{\theta,\alpha}(z))y}\mathrm{d}y\nonumber\\
&\ \  +k\mu \int_{0}^{\infty}\int_{0}^{y}p_{0}(v)(y-v)^{\sum_{j=1}^{l}m_{j}+k(r+1)-s} e^{-(\Lambda+k\mu)(y-v)} e^{-y\Psi_{\theta,\alpha}(z)}\,\mathrm{d}v\mathrm{d}y\Big) - \sum_{\substack{m_{j},r\geq0\\\sum_{j=1}^{l}jm_{j} -r=n}}\frac{\Psi_{\theta,\alpha}(z)}{z}\nonumber\\
&\ \ \cdot\Big(\prod_{i=1}^{l}\frac{c_{i}^{m_{i}}}{m_{i}!}\Big)\frac{\Lambda^{\sum_{j=1}^{l}m_{j}}(k\mu )^{k(r+1)-s}}{(k(r+1)-s-1)!} \int_{0}^{\infty}\int_{0}^{y}p_{0}(v)(y-v)^{\sum_{j=1}^{l}m_{j}+k(r+1)-s-1}e^{-(\Lambda+k\mu)(y-v)}e^{-y\Psi_{\theta,\alpha}(z)}\,\mathrm{d}v\mathrm{d}y\,\,\nonumber\\
&=\frac{(z+\theta)^\alpha-\theta^\alpha}{z}\Bigg(\sum_{\substack{m_{j},r\geq0\\\sum_{j=1}^{l}jm_{j} -r=n}}\frac{A_{r}^{n,s}(\textbf{m})}{((z+\theta)^\alpha-\beta)^{a_r^{n,s}(\textbf{m})}}+\sum_{\substack{m_{j},r\geq0\\\sum_{j=1}^{l}jm_{j} -r=n}}\sum_{h=1}^{\infty}\sum_{w=0}^{\infty}\sum_{\substack{\sum_{j=1}^{l}m'_{j}=w\\m'_{j}\geq0}}  \nonumber\\
&\hspace{1.1cm} \cdot \frac{B_{r,h,w}^{n,s}(\textbf{m},\textbf{m}')}{((z+\theta)^\alpha-\beta)^{b_{r,h,w}^{n,s}(\textbf{m},\textbf{m}')}}-\sum_{\substack{m_{j},r\geq0\\\sum_{j=1}^{l}jm_{j} -r=n}}\sum_{h=1}^{\infty}\sum_{w=0}^{\infty}\sum_{\substack{\sum_{j=1}^{l}m'_{j}=w\\m'_{j}\geq0}}\frac{C_{r,h,w}^{n,s}(\textbf{m},\textbf{m}')}{((z+\theta)^\alpha-\beta)^{c_{r,h,w}^{n,s}(\textbf{m},\textbf{m}')}}\Bigg),\label{pnsalpha}
\end{align}}
where the last step follows by using Lemma \ref{lemma4.1}. 
Similarly, on substituting $s=k$ in \eqref{lppnst} and using Eq. (18) of Pote and Kataria (2025), we get
{\scriptsize \begin{align}
\Tilde{p}_{n,k}^{\theta,\alpha}(z)
&=\frac{\Psi_{\theta,\alpha}(z)}{z}\sum_{\substack{m_{j},r\geq0\\\sum_{j=1}^{l}jm_{j} -r=n}}\Big(\prod_{i=1}^{l}\frac{c_{i}^{m_{i}}}{m_{i}!}\Big)\frac{\Lambda^{\sum_{j=1}^{l}m_{j}}(k\mu )^{rk}}{(rk)!} \Big(\int_{0}^{\infty}y^{\sum_{j=1}^{l}m_{j}+rk}e^{-(\Lambda+k\mu+\Psi_{\theta,\alpha}(z))y}\mathrm{d}y\nonumber\\
&\ \ +k\mu\int_{0}^{\infty}\int_{0}^{y}p_{0}(v)(y-v)^{\sum_{j=1}^{l}m_{j}+rk} e^{-(\Lambda+k\mu)(y-v)}e^{-y\Psi_{\theta,\alpha}(z)}\,\mathrm{d}v\, \mathrm{d}y\Big)-\frac{\Psi_{\theta,\alpha}(z)}{z}    
\sum_{\substack{m_{j},r\geq0\\\sum_{j=1}^{l}jm_{j} -r=n}}\frac{c_{1}^{m_{1}+1}}{(m_{1}+1)!}\nonumber\\
&\ \ \cdot\Big(\prod_{i=2}^{l}\frac{c_{i}^{m_{i}}}{m_{i}!}\Big)\frac{\Lambda^{\sum_{j=1}^{l}m_{j}+1}(k\mu )^{k(r+1)}}{(k(r+1)-1)!}\int_{0}^{\infty}\int_{0}^{y}p_{0}(v)(y-v)^{\sum_{j=1}^{l}m_{j}+k(r+1)}e^{-(\Lambda+k\mu)(y-v)}e^{-y\Psi_{\theta,\alpha}(z)}\,\mathrm{d}v\, \mathrm{d}y\,\,\nonumber\\
&=\frac{(z+\theta)^\alpha-\theta^\alpha}{z}\Bigg(\sum_{\substack{m_{j},r\geq0\\\sum_{j=1}^{l}jm_{j} -r=n}}\frac{A_{r}^{n,k}(\textbf{m})}{((z+\theta)^\alpha-\beta)^{a_r^{n,k}(\textbf{m})}}+\sum_{\substack{m_{j},r\geq0\\\sum_{j=1}^{l}jm_{j} -r=n}}\sum_{h=1}^{\infty}\sum_{w=0}^{\infty}\sum_{\substack{\sum_{j=1}^{l}m'_{j}=w\\m'_{j}\geq0}}  \nonumber\\
&\ \ \cdot \frac{B_{r,h,w}^{n,k}(\textbf{m},\textbf{m}')}{((z+\theta)^\alpha-\beta)^{b_{r,h,w}^{n,k}(\textbf{m},\textbf{m}')}}-\sum_{\substack{m_{j},r\geq0\\\sum_{j=1}^{l}jm_{j} -r=n}}\sum_{h=1}^{\infty}\sum_{w=0}^{\infty}\sum_{\substack{\sum_{j=1}^{l}m'_{j}=w\\m'_{j}\geq0}}\frac{C_{r,h,w}^{n,k}(\textbf{m},\textbf{m}')}{((z+\theta)^\alpha-\beta)^{c_{r,h,w}^{n,k}(\textbf{m},\textbf{m}')}}\Bigg).\label{pnkalpha}
\end{align}}
Finally, on taking the inverse Laplace transform on both side of \eqref{pnsalpha} and \eqref{pnkalpha}, we get the required result.
\end{proof}
\begin{remark}
On substituting $l=1$ in Theorem \ref{thm4.3}, we get explicit expression for the state probabilities of Erlang queue time-changed by an independent inverse tempered stable subordinator as given in \eqref{pnstemper}.
\end{remark}

\subsection{Queue length process}
The queue length process $\{\mathcal{L}^{\theta,\alpha}(t)\}_{t\ge0}$ corresponding to the tempered Erlang queue with multiple arrivals is defined as $\mathcal{L}^{\theta,\alpha}(t)\coloneqq g_k(\mathcal{Q}^{\theta,\alpha}(t))$, $t\ge0$. Here, $g_{k}:S\cup \{(0,0)\}\rightarrow\mathbb{N}_{0}$ is a bijective map defined as follows (see Ascione {\it{et al.}} (2020), p. 3252): 
\begin{equation}\label{gk}
g_{k}(n,s)=\left\{
\begin{array}{ll}
k(n-1)+s,\, (n,s)\in S,\\
0,\, (n,s)=(0,0).
\end{array}
\right.  
\end{equation}
Also, its inverse map is given by $(a_{k}(m),b_{k}(m))$, where 
\begin{equation*}
	b_{k}(m)=\left\{
	\begin{array}{ll}
		\min\{s>0: s \equiv m \pmod{k} \},\, m>0,\\
		0,\, m=0,
	\end{array}
	\right.  
\end{equation*}
and
\begin{equation*}
a_{k}(m)=\left\{
	\begin{array}{ll}
		\frac{m-b_{k}(m)}{k}+1,\, m>0,\\
		0,\, m=0.
	\end{array}
	\right.  
\end{equation*}
\begin{remark}
As $g_k(\cdot)$ is a bijective map, the \textit{state-phase} process $\{\mathcal{Q}^{\theta,\alpha}(t)\}_{t\ge0}$ can be represented by its queue length process in terms of phases.
\end{remark}
Let $q_{n}^{\theta,\alpha}(t)=\mathrm{Pr}(\mathcal{L}^{\theta,\alpha}(t)=n|\mathcal{L}^{\theta,\alpha}(0)=0)$, $n\ge0$ denote the state probabilities of $\{\mathcal{L}^{\theta,\alpha}(t)\}_{t\ge0}$ such that $q_{-n}^{\theta,\alpha}(t)=0$ for all $n\geq1$. Then, from \eqref{gk}, we have
\begin{equation}\label{Questprb}
q_{g_k(n,s)}^{\theta,\alpha}(t)=p_{n,s}^{\theta,\alpha}(t)
\end{equation}
and $q_n^{\theta,\alpha}(t)=p_{a_k(n),b_k(n)}^{\theta,\alpha}(t)$. By using \eqref{Questprb} in Theorem \ref{cptfde}, the state probabilities $q_{n}^{\theta,\alpha}(t)$, $n\ge0$ solve the following fractional differential equations:
\begin{align}\label{cptlength}
	\left.
	\begin{aligned}
		\partial_t^{\theta,\alpha}q_{0}^{\theta,\alpha}(t)
		&=-\Lambda q_{0}^{\theta,\alpha}(t)+k\mu q_{1}^{\theta,\alpha}(t), \\
		\partial_t^{\theta,\alpha}q_{n}^{\theta,\alpha}(t)
		&=-(\Lambda + k \mu)q_{n}^{\theta,\alpha}(t)+k\mu q_{n+1}^{\theta,\alpha}(t)+\Lambda\sum_{m=1}^{n}c'_{m}q_{n-m}^{\theta,\alpha}(t), \, n \geq 1,  
	\end{aligned}
	\right\}
\end{align}
where 
\begin{equation*}
	c'_{m}=\begin{cases}
		\lambda_{i}/\Lambda,\,m=ik,i=1,2,\dots,l,\\
		0,\, \text{otherwise},
	\end{cases}
\end{equation*}
with $q_{0}^{\theta,\alpha}(0)=1$ and $q_{n}^{\theta,\alpha}(0)=0,\,n\geq1$.

\begin{theorem}
The mean queue length of tempered Erlang queue with multiple arrivals, that is, $\mathcal{M}^{\theta,\alpha}(t)=\mathds{E}(\mathcal{L}^{\theta,\alpha}(t)|\mathcal{L}^{\theta,\alpha}(0)=0)$ solves the following fractional Cauchy problem:
\begin{equation*}
\partial_t^{\theta,\alpha}\mathcal{M}^{\theta,\alpha}(t)=k\mu p_{0}^{\theta,\alpha}(t)-k\mu+k\Lambda\sum_{i=1}^{l}ic_{i},\
\mathcal{M}^{\theta,\alpha}(0)=0.
\end{equation*}
\end{theorem}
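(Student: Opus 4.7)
The plan is to derive the governing equation for $\mathcal{M}^{\theta,\alpha}(t)$ directly from the system \eqref{cptlength} for the queue length state probabilities $q_n^{\theta,\alpha}(t)$, by multiplying the $n$-th equation by $n$ and summing. Writing $\mathcal{M}^{\theta,\alpha}(t)=\sum_{n=1}^{\infty} n\, q_n^{\theta,\alpha}(t)$, the initial condition $\mathcal{M}^{\theta,\alpha}(0)=0$ is immediate from $q_0^{\theta,\alpha}(0)=1$ and $q_n^{\theta,\alpha}(0)=0$ for $n\ge 1$. Assuming the Caputo tempered fractional derivative $\partial_t^{\theta,\alpha}$ can be interchanged with the series (this follows from linearity of \eqref{caputotemp} together with uniform convergence of $\sum n q_n^{\theta,\alpha}(t)$ on compacts, since $\mathcal{L}^{\theta,\alpha}(t)$ is dominated by the $L^1$ norm of $\mathcal{L}(Y_{\theta,\alpha}(t))$), we get
\begin{equation*}
\partial_t^{\theta,\alpha}\mathcal{M}^{\theta,\alpha}(t)=\sum_{n=1}^{\infty} n\,\partial_t^{\theta,\alpha}q_n^{\theta,\alpha}(t),
\end{equation*}
and then substitute the right-hand side of \eqref{cptlength} for each $n\ge 1$.

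The main computation is to handle three families of terms. The diagonal contribution gives $-(\Lambda+k\mu)\mathcal{M}^{\theta,\alpha}(t)$. The forward-shift term is re-indexed by $m=n+1$, producing
\begin{equation*}
k\mu\sum_{m=2}^{\infty}(m-1)q_m^{\theta,\alpha}(t)=k\mu\mathcal{M}^{\theta,\alpha}(t)-k\mu\sum_{m=1}^{\infty}q_m^{\theta,\alpha}(t)=k\mu\mathcal{M}^{\theta,\alpha}(t)-k\mu\bigl(1-p_0^{\theta,\alpha}(t)\bigr),
\end{equation*}
using the normalization $\sum_{n\ge0}q_n^{\theta,\alpha}(t)=1$ and $q_0^{\theta,\alpha}(t)=p_0^{\theta,\alpha}(t)$. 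The convolution term requires swapping the order of summation and the substitution $j=n-m$:
\begin{equation*}
\Lambda\sum_{m=1}^{\infty}c'_m\sum_{j=0}^{\infty}(j+m)q_j^{\theta,\alpha}(t)=\Lambda\mathcal{M}^{\theta,\alpha}(t)\sum_{m=1}^{\infty}c'_m+\Lambda\sum_{m=1}^{\infty}m c'_m.
\end{equation*}
From the definition of $c'_m$, one has $\sum_{m=1}^{\infty}c'_m=\sum_{i=1}^l c_i=1$ and $\sum_{m=1}^{\infty}m c'_m=k\sum_{i=1}^l i c_i$, so this contribution becomes $\Lambda\mathcal{M}^{\theta,\alpha}(t)+k\Lambda\sum_{i=1}^l i c_i$.

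Adding the three contributions, the coefficient of $\mathcal{M}^{\theta,\alpha}(t)$ is $-(\Lambda+k\mu)+k\mu+\Lambda=0$, and the remaining terms collapse to $k\mu p_0^{\theta,\alpha}(t)-k\mu+k\Lambda\sum_{i=1}^l i c_i$, which is exactly the claimed right-hand side. The main obstacle is really the formal justification for exchanging $\partial_t^{\theta,\alpha}$ with the infinite series; this can be handled either directly (by verifying uniform convergence and using the definition \eqref{caputotemp}) or by taking Laplace transforms and using \eqref{laplacetempcapu}, reducing the identity to an algebraic one. Everything else is bookkeeping via re-indexing and the elementary normalization identities noted above.
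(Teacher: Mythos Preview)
Your proposal is correct and follows essentially the same approach as the paper: the paper's proof simply writes $\mathcal{M}^{\theta,\alpha}(t)=\sum_{n\ge0}nq_n^{\theta,\alpha}(t)$, applies the Caputo tempered derivative, substitutes the system \eqref{cptlength}, and then defers to Theorem~3.5 of Pote and Kataria (2025) for the remaining re-indexing computation that you have carried out in full.
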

\begin{proof}
It follows that
\begin{equation}\label{dtmt}
\mathcal{M}^{\theta,\alpha}(t)=\sum_{n=0}^{\infty}nq_{n}^{\theta,\alpha}(t), \ \mathcal{M}^{\theta,\alpha}(0)=0.
\end{equation}
On taking the Caputo tempered fractional derivative on both sides of \eqref{dtmt} and by using \eqref{cptlength}, the proof follows similar lines to that of Theorem 9 of Pote and Kataria (2025).
\end{proof}

\begin{theorem}
The mean queue length of tempered Erlang queue with multiple arrivals is given by
{\footnotesize\begin{align*}
\mathcal{M}^{\theta,\alpha}(t)&=k\Big(\Lambda\sum_{i=1}^{l}ic_{i}-\mu\Big)e^{-\theta t}\sum_{m=0}^{\infty}\theta^m t^{m+\alpha}E_{\alpha,\alpha+m+1}(\theta^\alpha t^\alpha)\nonumber\\
&\ \ +k\mu \sum_{h=1}^{\infty}\sum_{n=0}^{\infty}\sum_{\substack{\sum_{j=1}^{l}m_{j}=n\\m_{j}\geq0}}C_{h,n}^{0}(\textbf{m})e^{-\theta t} \sum_{m=0}^{\infty}\theta^m t^{\beta_{h,n}^{\mathcal{M}}(\textbf{m})+m-1}E_{\alpha,\beta_{h,n}^{\mathcal{M}}(\textbf{m})+m}^{\gamma_{h,n}^{0}(\textbf{m})}(\beta t^{\alpha}),\,t\geq0,
\end{align*}}
where  $\beta_{h,n}^{\mathcal{M}}(\textbf{m})=\alpha\gamma_{h,n}^{0}(\textbf{m})+1$.
\end{theorem}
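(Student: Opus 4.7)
The plan is to Laplace-transform the fractional Cauchy problem from the preceding theorem and then invert term by term, reusing the intermediate Laplace-space representation of $\tilde{p}_0^{\theta,\alpha}(z)$ already obtained in the proof of Theorem \ref{theorem4.2}.

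First, I would apply \eqref{laplacetempcapu} to
$\partial_t^{\theta,\alpha}\mathcal{M}^{\theta,\alpha}(t)=k\mu p_{0}^{\theta,\alpha}(t)-k\mu+k\Lambda\sum_{i=1}^{l}ic_{i}$
together with the initial condition $\mathcal{M}^{\theta,\alpha}(0)=0$. Solving algebraically for $\widetilde{\mathcal{M}}^{\theta,\alpha}(z)$ yields
$$\widetilde{\mathcal{M}}^{\theta,\alpha}(z)=\frac{k\mu\,\tilde{p}_{0}^{\theta,\alpha}(z)}{\Psi_{\theta,\alpha}(z)}+\frac{k\Lambda\sum_{i=1}^{l}ic_i-k\mu}{z\,\Psi_{\theta,\alpha}(z)}.$$
Next, I would plug in the form of $\tilde{p}_0^{\theta,\alpha}(z)$ obtained in the proof of Theorem \ref{theorem4.2} before its two terms are split, namely (factoring the common $(z+\theta)^\alpha-\theta^\alpha=\Psi_{\theta,\alpha}(z)$ out of \eqref{p0alpha4})
$$\tilde{p}_0^{\theta,\alpha}(z)=\Psi_{\theta,\alpha}(z)\sum_{h=1}^{\infty}\sum_{n=0}^{\infty}\sum_{\substack{\sum_{j=1}^{l}m_{j}=n\\ m_{j}\geq0}}\frac{C_{h,n}^{0}(\mathbf{m})}{z\bigl((z+\theta)^\alpha-\beta\bigr)^{\gamma_{h,n}^{0}(\mathbf{m})}}.$$
The common factor $\Psi_{\theta,\alpha}(z)$ then cancels, and $\widetilde{\mathcal{M}}^{\theta,\alpha}(z)$ is reduced to a sum of pieces of the single archetype $F_{\xi,\gamma}(z)\coloneqq1/\bigl(z\bigl((z+\theta)^\alpha-\xi\bigr)^\gamma\bigr)$.

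The key step is to invert $F_{\xi,\gamma}$. By \eqref{mi} applied with parameters $(\alpha,\alpha\gamma,\gamma)$ one has $\mathcal{L}^{-1}\bigl(1/(z^\alpha-\xi)^\gamma\bigr)(t)=t^{\alpha\gamma-1}E_{\alpha,\alpha\gamma}^{\gamma}(\xi t^\alpha)$; the Laplace shift $z\mapsto z+\theta$ then introduces a factor $e^{-\theta t}$, and convolving with the inverse of $1/z$ while expanding $e^{-\theta y}=e^{-\theta t}\sum_{m\geq 0}\theta^m(t-y)^m/m!$ lets me apply the Prabhakar integral already used in \eqref{inlshi1}. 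The result is
$$\mathcal{L}^{-1}\bigl(F_{\xi,\gamma}\bigr)(t)=e^{-\theta t}\sum_{m=0}^{\infty}\theta^m t^{\alpha\gamma+m}E_{\alpha,\alpha\gamma+m+1}^{\gamma}(\xi t^\alpha).$$

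Finally I would specialize this formula twice: with $(\xi,\gamma)=(\theta^\alpha,1)$ to invert the $1/(z\Psi_{\theta,\alpha}(z))$ piece, which produces the first line of \eqref{malpahat} after multiplication by $k\bigl(\Lambda\sum_{i=1}^{l}ic_i-\mu\bigr)$; and with $(\xi,\gamma)=(\beta,\gamma_{h,n}^{0}(\mathbf{m}))$ inside the triple sum, where the identity $\beta_{h,n}^{\mathcal{M}}(\mathbf{m})=\alpha\gamma_{h,n}^{0}(\mathbf{m})+1$ rewrites the exponents to exactly match the second line of \eqref{malpahat}. The only non-routine point is justifying the term-by-term interchange of the triple sum with the Laplace inversion, but that follows from the same absolute-convergence bookkeeping used in Theorem \ref{theorem4.2}, so the main obstacle is notational rather than conceptual.
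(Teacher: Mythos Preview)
Your proof is correct and arrives at exactly the same Laplace-space expression \eqref{meanalphat} as the paper, after which the inversion you describe coincides with the paper's use of \eqref{inlshi1}--\eqref{inlshi2}. The only difference is the starting point: the paper does not Laplace-transform the fractional Cauchy problem but instead writes $\mathcal{M}^{\theta,\alpha}(t)=\int_0^\infty \mathcal{M}(y)\,\mathrm{Pr}(Y_{\theta,\alpha}(t)\in\mathrm{d}y)$ via subordination, substitutes the explicit formula for the untransformed mean $\mathcal{M}(y)$ from Pote and Kataria (2025), and then takes the Laplace transform using \eqref{lapdeninvtemp}. Your route is slightly more self-contained because it avoids citing the external expression for $\mathcal{M}(y)$ and instead leans on the preceding theorem; the paper's route keeps the subordination structure visible and bypasses the need to justify the Laplace transform of the Caputo tempered derivative of $\mathcal{M}^{\theta,\alpha}$. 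Either way the two arguments merge at \eqref{meanalphat}, so the distinction is one of packaging rather than substance.
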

\begin{proof}
Let $\mathcal{M}(t)$ be the mean queue length of Erlang queue with multiple arrivals $\{\mathcal{Q}(t)\}_{t\ge0}$. Then, by using \eqref{pnsalphat} in \eqref{dtmt}, we have
\begin{equation}\label{m1talpha}
\mathcal{M}^{\theta,\alpha}(t)=\int_{0}^{\infty}\mathcal{M}(y)\mathrm{Pr}(Y_{\theta,\alpha}(t)\in\mathrm{d}y),
\end{equation}
which follows on using \eqref{Questprb}. 
On using Eq. (32) of Pote and Kataria (2025) in 
\eqref{m1talpha}, we get
{\small\begin{equation}\label{m1ttt}
\mathcal{M}^{\theta,\alpha}(t)=k\Big(\Lambda\sum_{i=1}^{l}ic_{i}-\mu\Big)\int_{0}^{\infty}y\mathrm{Pr}(Y_{\theta,\alpha}(t)\in\mathrm{d}y)+k\mu\int_{0}^{\infty}\int_{0}^{y}p_{0}(u)\mathrm{d}u\mathrm{Pr}(Y_{\theta,\alpha}(t)\in\mathrm{d}y).
\end{equation}}
By taking the Laplace transform on both sides of \eqref{m1ttt} and using \eqref{lapdeninvtemp}, we obtain
{\small\begin{align}
\Tilde{\mathcal{M}}^{\theta,\alpha}(z)&=k\Big(\Lambda\sum_{i=1}^{l}ic_{i}-\mu\Big)\frac{\Psi_{\theta,\alpha}(z)}{z}\int_{0}^{\infty}ye^{-y\Psi_{\theta,\alpha}(z)}\mathrm{d}y+k\mu \frac{\Psi_{\theta,\alpha}(z)}{z}\int_{0}^{\infty}\int_{0}^{y}p_{0}(u)e^{-y\Psi_{\theta,\alpha}(z)}\mathrm{d}u\mathrm{d}y\nonumber\\
&=k\Big(\Lambda\sum_{i=1}^{l}ic_{i}-\mu\Big)\frac{1}{z\Psi_{\theta,\alpha}(z)}+k\mu \frac{\Psi_{\theta,\alpha}(z)}{z} \int_{0}^{\infty}p_{0}(u)e^{-u\Psi_{\theta,\alpha}(z)}\mathrm{d}u\nonumber\\
&=k\Big(\Lambda\sum_{i=1}^{l}ic_{i}-\mu\Big)\frac{1}{z\Psi_{\theta,\alpha}(z)}+k\mu z^{-1}\sum_{h=1}^{\infty}\sum_{n=0}^{\infty}\sum_{\substack{\sum_{j=1}^{l}m_{j}=n\\m_{j}\geq0}}\frac{C_{h,n}^{0}(\textbf{m})}{((z+\theta)^{\alpha}-\beta)^{\gamma_{h,n}^{0}(\textbf{m})}},\label{meanalphat}
\end{align}}
where the last step follows on using \eqref{p0alpha4}.
Finally, on taking the inverse Laplace transform on both sides of \eqref{meanalphat}, and using \eqref{inlshi1} and \eqref{inlshi2}, we get the required result.
\end{proof}

\subsection{Distribution of busy period}
The busy period in a tempered Erlang queue with multiple arrivals is defined as the duration starting with the arrival of a customer to an initially empty system and ending at the first instance when the system becomes empty again.

Let $\{{\mathcal{L}}_{*}(t)\}_{t\geq0}$ be the process whose state probabilities 
$\mathcal{P}_{n}(t)=\mathrm{Pr}({\mathcal{L}}_{*}(t)=n|{\mathcal{L}}_{*}(0)=a)$, $a\in \{k, 2k,\dots,lk\}$ solve the system of differential equations given in Eq. (36) of Pote and Kataria (2025). Observe that the process $\{{\mathcal{L}}_{*}(t)\}_{t\geq0}$ behaves similar to $\{\mathcal{L}(t)\}_{t\geq0}$ except that it starts from $a$ instead of 0, that is, it represents the arrival of first group of customers in the system, and 0 is its absorbing state. Also, let
\begin{equation}\label{lentemp}
\mathscr{L}_*^{\theta,\alpha}(t)\coloneqq\mathscr{L}_*(Y_{\theta,\alpha}(t))
\end{equation}  
be a time-changed process whose state probabilities are denoted by $\mathscr{P}_n^{\theta,\alpha}(t)=\mathrm{Pr}({\mathcal{L}}_{*}^{\theta,\alpha}(t)=n|{\mathcal{L}}_{*}^{\theta,\alpha}(0)=a)$. 

In the following result, we obtain the distribution of busy period $\mathcal{B}^{\theta,\alpha}$ for $\{\mathcal{Q}^{\theta,\alpha}(t)\}_{t\ge0}$.    
\begin{theorem}
Let  $F_{\mathcal{B}^{\theta,\alpha}}(t)=\mathrm{Pr}\{\mathcal{B}^{\theta,\alpha}\le t\}$. Then, 
{\small \begin{align*}
F_{\mathcal{B}^{\theta,\alpha}}(t)&=a\sum_{n=0}^{\infty}\Lambda^{n}\sum_{\substack{\sum_{j=1}^{l}m_{j}=n\\m_{j}\in\mathbb{N}_{0}}}\Big(\prod_{i=1}^{l}\frac{c_{i}^{m_{i}}}{m_{i}!}\Big)\frac{(k\mu)^{a+k\sum_{j=1}^{l}jm_{j}}(a+n+k\sum_{j=1}^{l}jm_{j}-1)!}{(a+k\sum_{j=1}^{l}jm_{j})!}\\
&\hspace{2.8cm}\cdot \sum_{m=0}^{\infty}\theta^mt^{\alpha(a+n+k\sum_{j=1}^{l}jm_j)+m}E_{\alpha,\alpha(a+n+k\sum_{j=1}^{l}jm_j)+m+1}^{a+n+k\sum_{j=1}^{l}jm_j} (\beta t^\alpha), \ t\geq0.\\
%&=\sum_{n=0}^{\infty}\sum_{\substack{\sum_{j=1}^{l}m_{j}=n\\m_{j}\geq0}}k\mu C^{0}_{a,n}(\textbf{m})\sum_{m=0}^{\infty}\theta^m t^{\alpha \gamma}
\end{align*}}
%where $\gamma^0_{a,n}(\textbf{m})$, $C^{0}_{a,n}(\textbf{m})$ are defined in \eqref{notaquea}.
\end{theorem}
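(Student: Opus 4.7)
The plan is to identify $F_{\mathcal{B}^{\theta,\alpha}}(t)$ with $\mathcal{P}_0^{\theta,\alpha}(t)$, since $0$ is the absorbing state of $\{\mathcal{L}_*^{\theta,\alpha}(t)\}_{t\ge0}$ and the event $\{\mathcal{B}^{\theta,\alpha}\le t\}$ coincides with the event that absorption into $0$ has occurred by time $t$. Via the subordination \eqref{lentemp},
$$\mathcal{P}_0^{\theta,\alpha}(t)=\int_0^\infty \mathcal{P}_0(y)\,\mathrm{Pr}(Y_{\theta,\alpha}(t)\in\mathrm{d}y),$$
where the explicit form of $\mathcal{P}_0(y)$ (the busy-period distribution for the untimechanged Erlang queue with multiple arrivals starting from $a$) is the one obtained in Pote and Kataria (2025) from solving the absorbing-boundary system Eq.~(2.34) there.

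Next I would pass to the Laplace transform in $t$. Using \eqref{lapdeninvtemp} I get
$$\tilde F_{\mathcal{B}^{\theta,\alpha}}(z)=\frac{\Psi_{\theta,\alpha}(z)}{z}\int_0^\infty \mathcal{P}_0(y)\,e^{-y\Psi_{\theta,\alpha}(z)}\,\mathrm{d}y.$$
Substituting the Pote--Kataria series representation of $\mathcal{P}_0(y)$, whose generic term carries a factor $y^{a+n+k\sum_j jm_j-1}e^{-(\Lambda+k\mu)y}$ together with the combinatorial weight $a\,\Lambda^n\prod_i c_i^{m_i}/m_i!\cdot(k\mu)^{a+k\sum_j jm_j}(a+n+k\sum_j jm_j-1)!/(a+k\sum_j jm_j)!$, each inner integral collapses (after noting $\Psi_{\theta,\alpha}(z)+\Lambda+k\mu=(z+\theta)^\alpha-\beta$) to a Gamma-type factor, so that termwise
$$\tilde F_{\mathcal{B}^{\theta,\alpha}}(z)=a\sum_{n,\mathbf{m}}\!\!\left[\,\text{coeff}(\mathbf m,n)\,\right]\frac{(z+\theta)^\alpha-\theta^\alpha}{z\bigl((z+\theta)^\alpha-\beta\bigr)^{a+n+k\sum_j jm_j}}.$$

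Finally I would invert the Laplace transform, mimicking the computation in the proof of Theorem~\ref{theorem4.2}. The shift $z\mapsto z+\theta$ combined with \eqref{mi} recovers the three-parameter Mittag--Leffler kernel, while the factor $1/z$ introduces a convolution against a constant whose expansion $e^{-\theta y}=\sum_m(-\theta y)^m/m!$ (absorbed into the shift) generates the series $\sum_m\theta^m t^{\alpha(a+n+k\sum_j jm_j)+m}E_{\alpha,\alpha(a+n+k\sum_j jm_j)+m+1}^{a+n+k\sum_j jm_j}(\beta t^\alpha)$. The two numerator contributions $(z+\theta)^\alpha$ and $\theta^\alpha$ differ by a shift of $\alpha$ in the Mittag--Leffler second index and, after the $1/z$ convolution, they telescope to a single term with second index $\alpha(a+n+k\sum_j jm_j)+m+1$, matching the stated formula.

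The main technical obstacle will be this last step: handling the cancellation between the $(z+\theta)^\alpha$ and $\theta^\alpha$ pieces so that the inverse transform yields a single Mittag--Leffler series rather than the difference of two, as well as the bookkeeping needed to align the combinatorial weights with the compound-arrival structure $\sum_{\sum_j m_j=n}\prod_i c_i^{m_i}/m_i!$. The rest of the argument is a direct parallel to Theorems~\ref{theorem4.2} and \ref{thm4.3}.
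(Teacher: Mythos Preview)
Your overall strategy is exactly the paper's: identify $F_{\mathcal{B}^{\theta,\alpha}}(t)=\mathcal{P}_0^{\theta,\alpha}(t)$, use the subordination representation, take the Laplace transform via \eqref{lapdeninvtemp}, and invert term-by-term. The gap is in the form you assign to the untimechanged $\mathcal{P}_0(y)$.

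The absorption probability $\mathcal{P}_0(y)$ from Eq.~(2.33) of Pote and Kataria (2025) is a \emph{distribution function}: its generic term carries
\[
\int_0^y z^{\,a+n+k\sum_j jm_j-1}\,e^{-(\Lambda+k\mu)z}\,\mathrm{d}z,
\]
not the density-type factor $y^{\,a+n+k\sum_j jm_j-1}e^{-(\Lambda+k\mu)y}$ that you quote. With the correct integral form, after inserting it into $\frac{\Psi_{\theta,\alpha}(z)}{z}\int_0^\infty \mathcal{P}_0(y)\,e^{-y\Psi_{\theta,\alpha}(z)}\,\mathrm{d}y$ and applying Fubini to the double integral, the inner $y$-integral produces $e^{-z\Psi_{\theta,\alpha}(v)}/\Psi_{\theta,\alpha}(v)$, which cancels the prefactor $\Psi_{\theta,\alpha}(z)$. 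One is left with
\[
\tilde F_{\mathcal{B}^{\theta,\alpha}}(z)=\sum_{n,\mathbf m}[\text{coeff}]\,\frac{1}{z\bigl((z+\theta)^\alpha-\beta\bigr)^{a+n+k\sum_j jm_j}},
\]
i.e.\ \emph{without} the numerator $(z+\theta)^\alpha-\theta^\alpha$. Your Laplace transform therefore has an extraneous factor $\Psi_{\theta,\alpha}(z)$, and the ``telescoping'' you invoke to collapse the resulting difference of two Mittag--Leffler series does not occur: the two pieces you describe differ in both the power of $t$ and the second Mittag--Leffler index by $\alpha$, and they do not combine into the single series in the statement. With the correct transform above, the inversion is a direct application of \eqref{mi} plus the shift and the $1/z$-convolution (exactly as in \eqref{inlshi2}), yielding the single series immediately---no cancellation is needed.
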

\begin{proof}
It follows from the construction of process $\{{\mathscr{L}}_{*}^{\theta,\alpha}(t)\}_{t\geq0}$ that $F_{B^{\theta,\alpha}}(t)=\mathscr{P}_0^{\theta,\alpha}(t)$. By using \eqref{lentemp}, we have
\begin{align}
F_{\mathcal{B}^{\theta,\alpha}}(t)&=\int_{0}^{\infty}F_\mathcal{B}(y)\mathrm{Pr}\{Y_{\theta,\alpha}(t)\in \mathrm{d}y\}\nonumber\\
&=a\sum_{n=0}^{\infty}\Lambda^{n}\sum_{\substack{\sum_{j=1}^{l}m_{j}=n\\m_{j}\in\mathbb{N}_{0}}}\Big(\prod_{i=1}^{l}\frac{c_{i}^{m_{i}}}{m_{i}!}\Big)\frac{(k\mu)^{a+k\sum_{j=1}^{l}jm_{j}}}{(a+k\sum_{j=1}^{l}jm_{j})!}\nonumber\\
&\hspace{3cm}\cdot\int_{0}^{\infty}\int_{0}^{y}z^{a+n+k\sum_{j=1}^{l}jm_{j}-1} e^{-(\Lambda+k\mu)z}\mathrm{d}z\mathrm{Pr}\{Y_{\theta,\alpha}(t)\in \mathrm{d}y\},\label{busy1}
\end{align}
which follows on using Eq. (35) of Pote and Kataria (2025). On taking the Laplace transform on both sides of \eqref{busy1}, we get
{\footnotesize\begin{align}
\tilde{F}_{\mathcal{B}^{\theta,\alpha}}(t)&=a\sum_{n=0}^{\infty}\Lambda^{n}\sum_{\substack{\sum_{j=1}^{l}m_{j}=n\\m_{j}\in\mathbb{N}_{0}}}\Big(\prod_{i=1}^{l}\frac{c_{i}^{m_{i}}}{m_{i}!}\Big)\frac{\Psi_{\theta,\alpha}(v)(k\mu)^{a+k\sum_{j=1}^{l}jm_{j}}}{v(a+k\sum_{j=1}^{l}jm_{j})!}\nonumber\\
&\hspace{6cm}\cdot\int_{0}^{\infty}\int_{0}^{y}z^{a+n+k\sum_{j=1}^{l}jm_{j}-1} e^{-(\Lambda+k\mu)z}e^{-y\Psi_{\theta,\alpha}(v)}\mathrm{d}z\mathrm{d}y\nonumber\\
&=a\sum_{n=0}^{\infty}\Lambda^{n}\sum_{\substack{\sum_{j=1}^{l}m_{j}=n\\m_{j}\in\mathbb{N}_{0}}}\Big(\prod_{i=1}^{l}\frac{c_{i}^{m_{i}}}{m_{i}!}\Big)\frac{\Psi_{\theta,\alpha}(v)(k\mu)^{a+k\sum_{j=1}^{l}jm_{j}}}{v(a+k\sum_{j=1}^{l}jm_{j})!}\nonumber\\
&\hspace{6cm}\cdot\int_{0}^{\infty}\int_{z}^{\infty}z^{a+n+k\sum_{j=1}^{l}jm_{j}-1} e^{-(\Lambda+k\mu)z}e^{-y\Psi_{\theta,\alpha}(v)}\mathrm{d}y\mathrm{d}z\nonumber\\
&=a\sum_{n=0}^{\infty}\Lambda^{n}\sum_{\substack{\sum_{j=1}^{l}m_{j}=n\\m_{j}\in\mathbb{N}_{0}}}\Big(\prod_{i=1}^{l}\frac{c_{i}^{m_{i}}}{m_{i}!}\Big)\frac{(k\mu)^{a+k\sum_{j=1}^{l}jm_{j}}}{v(a+k\sum_{j=1}^{l}jm_{j})!}\nonumber\\
&\hspace{6cm}\cdot\int_{0}^{\infty}z^{a+n+k\sum_{j=1}^{l}jm_{j}-1} e^{-(\Lambda+k\mu+\Psi_{\theta,\alpha}(v))z}\mathrm{d}z\nonumber\\
&=a\sum_{n=0}^{\infty}\Lambda^{n}\sum_{\substack{\sum_{j=1}^{l}m_{j}=n\\m_{j}\in\mathbb{N}_{0}}}\Big(\prod_{i=1}^{l}\frac{c_{i}^{m_{i}}}{m_{i}!}\Big)\frac{v^{-1}(a+n+k\sum_{j=1}^{l}jm_{j}-1)!(k\mu)^{a+k\sum_{j=1}^{l}jm_{j}}}{(a+k\sum_{j=1}^{l}jm_{j})!(\Lambda+k\mu+\Psi_{\theta,\alpha}(v))^{a+n+k\sum_{j=1}^{l}jm_{j}}}.\label{busy3}
\end{align}}
Finally, on taking the inverse Laplace transform on both sides of \eqref{busy3} and using \eqref{inlshi2}, we get the required result.
\end{proof}
\section{Tempered Erlang queue}\label{se4}
Here, we study the Erlang queue time-changed by the first hitting time of tempered stable subordinator. We call it the tempered Erlang queue. 

First, we recall the definition of an Erlang queue. It is a queue in which the arrivals are modeled according to a Poisson process with rate $\lambda>0$ and the service system is composed of $k$ phases where each phase has exponential service time with parameter $k\mu$. 
Let $N(t)$ be the number of customers in the system at time $t$ and let $S(t)$ be the phase of customer being served at time $t$. Then, the state-phase process $Q(t)=(N(t),S(t))$, $t\ge0$ is an Erlang queue. 

Let $\{Y_{\theta,\alpha}(t)\}_{t\ge0}$ be an inverse tempered stable subordinator which is independent of $\{Q(t)\}_{t\ge0}$. We define the tempered Erlang queue $\{Q^{\theta,\alpha}(t)\}_{t\ge0}$, $\theta>0$, $0<\alpha<1$ as follows:
\begin{equation}\label{temperdef}
Q^{\theta,\alpha}(t)\coloneqq Q(Y_{\theta,\alpha}(t)),\ t\ge0,
\end{equation}
and denote its state probabilities by
\begin{align*}
\mathcal{P}_{0}^{\theta,\alpha}(t)&=\mathrm{Pr}(Q^{\theta,\alpha}(t)=(0,0)|Q^{\theta,\alpha}(0)=(0,0)),\\
\mathcal{P}_{n,s}^{\theta,\alpha}(t)&=\mathrm{Pr}(Q^{\theta,\alpha}(t)=(n,s)|Q^{\theta,\alpha}(0)=(0,0)),\, (n,s)\in S,
\end{align*}
where $S=\{(n,s) \in \mathbb{N} \times \mathbb{N}: n \geq 1, \, 1 \leq s \leq k \}$.	

On substituting $l=1$ in Theorem \ref{cptfde}, the state probabilities of tempered Erlang queue solve the following system of fractional differential equations: 
{\small\begin{align}\label{deertemp}
\left.\begin{aligned}
\partial_t^{\theta,\alpha}\mathcal{P}_{0}^{\theta,\alpha}(t)
&=-\lambda \mathcal{P}_{0}^{\theta,\alpha}(t)+k\mu \mathcal{P}_{1,1}^{\theta,\alpha}(t),  \\
\partial_t^{\theta,\alpha}\mathcal{P}_{1,s}^{\theta,\alpha}(t)
&=-(\lambda + k \mu)\mathcal{P}_{1,s}^{\theta,\alpha}(t)+k\mu \mathcal{P}_{1,s+1}^{\theta,\alpha}(t), \, 1 \leq s \leq k-1, \\
\partial_t^{\theta,\alpha}\mathcal{P}_{1,k}^{\theta,\alpha}(t)
&=-(\lambda + k \mu)\mathcal{P}_{1,k}^{\theta,\alpha}(t)+k\mu \mathcal{P}_{2,1}^{\theta,\alpha}(t)+ \lambda \mathcal{P}_{0}^{\theta,\alpha}(t),  \\
	\partial_t^{\theta,\alpha}\mathcal{P}_{n,s}^{\theta,\alpha}(t)
	&=-(\lambda + k \mu)\mathcal{P}_{n,s}^{\theta,\alpha}(t)+k\mu \mathcal{P}_{n,s+1}^{\theta,\alpha}(t)+\lambda \mathcal{P}_{n-1,s}^{\theta,\alpha}(t), \   1\leq s \leq k-1,  \, n \geq 2,  \\
	\partial_t^{\theta,\alpha}\mathcal{P}_{n,k}^{\theta,\alpha}(t)
	&=-(\lambda + k \mu)\mathcal{P}_{n,k}^{\theta,\alpha}(t)+k\mu \mathcal{P}_{n+1,1}^{\theta,\alpha}(t)+\lambda \mathcal{P}_{n-1,k}^{\theta,\alpha}(t),\, n\geq2,
\end{aligned}\right\}
\end{align}}
with $\mathcal{P}_{0}^{\theta,\alpha}(0)=1$ and
$\mathcal{P}_{n,s}^{\theta,\alpha}(0)=0$, $1\leq s\leq k$, $n\geq 1$.

In the subsequent results, we will be using the following notations:
   $\eta=\theta^\alpha-\lambda-k\mu$,
$\psi_{\theta,\alpha}(z)=(z+\theta)^\alpha-\theta^\alpha$, $d_{r}^{n,s}=n+r+k(r+1)-s+1$, $\zeta_{r}^{n,s}=\alpha(d_{r}^{n,s}-1)+1$, 
\begin{align*}
		\delta_{h,n}^0&=h+n(k+1),\\
		Z^0_{h,n}&=\frac{h\lambda^n}{n!}\frac{(k\mu)^{\delta_{h,n}^0}(\delta_{h,n}^0-1)!}{(\delta_{h,n}^0-n)!},\\ \psi^0_{h,n}&=\alpha(\delta_{h,n}^0-1)+1,\\
		D_{r}^{n,s}&=\frac{\lambda^{n+r}(k\mu )^{k(r+1)-s}}{(n+r)!(k(r+1)-s)!}(d_{r}^{n,s}-1)!,\\
		F_{r,h,w}^{n,s}&=k\mu D_{r}^{n,s}Z_{h,w}^{0},\\
		Z_{r,h,w}^{n,s}&=\begin{cases}
			k\mu D_{r}^{n,s+1}Z_{h,w}^{0},\,s\ne k,\\
			k\mu D_{r}^{n+1,1}Z_{h,w}^{0},\, s=k,
		\end{cases}	\\
		z_{r,h,w}^{n,s}&=\begin{cases}
			d_{r}^{n,s+1}+\delta_{h,w}^{0},\,s\ne k,\\
			d_{r}^{n+1,1}+\delta_{h,w}^{0},\, s=k,
		\end{cases}
\end{align*}
$f_{r,h,w}^{n,s}=d_{r}^{n,s}+\delta_{h,w}^{0}$, $\eta_{r,h,w}^{n,s}=\alpha(f_{r,h,w}^{n,s}-1)+1$
and 
$\alpha_{r,h,w}^{n,s}=\alpha(z_{r,h,w}^{n,s}-1)+1$, where $n$, $r$, $w$ are non-negative integers and $h$ is a positive integer. 

On substituting $l=1$ in Theorem \ref{theorem4.2}, the zero state probability $\mathcal{P}_{0}^{\theta,\alpha}(t)$ of the tempered Erlang queue is given by
{\small\begin{equation}\label{ertempzero}
\mathcal{P}_{0}^{\theta,\alpha}(t)=\sum_{h=1}^{\infty}\sum_{n=0}^{\infty}\sum_{m=0}^{\infty}Z_{h,n}^{0}e^{-\theta t}\theta^mt^{\Psi_{h,n}^{0}+m-1}\Big(E_{\alpha,\Psi_{h,n}^{0}+m}^{\delta_{h,n}^{0}}(\eta t^{\alpha})-(\theta t)^\alpha E_{\alpha,\Psi_{h,n}^{0}+\alpha+m}^{\delta_{h,n}^{0}}(\eta t^{\alpha})\Big).
\end{equation}}

Also, for $n\geq1$ and $1\leq s\leq k$, and on taking $l=1$ in Theorem \ref{thm4.3}, the state probabilities $\mathcal{P}_{n,s}^{\theta,\alpha}(t)$ of tempered Erlang queue are given by  		
{\footnotesize \begin{align}\label{pnstemper}
\mathcal{P}_{n,s}^{\theta,\alpha}(t)&=\sum_{r=0}^{\infty}D_{r}^{n,s}\sum_{a=0}^{\infty}e^{-\theta t}\theta^\alpha t^{\zeta_{r}^{n,s}+a-1}\Big(E_{\alpha,\zeta_{r}^{n,s}+a}^{d_{r}^{n,s}}(\eta t^{\alpha})-(\theta t)^\alpha E_{\alpha,\zeta_{r}^{n,s}+\alpha +a}^{d_{r}^{n,s}}(\eta t^{\alpha})\Big)\nonumber\\
&\ \ +\sum_{r=0}^{\infty}\sum_{h=1}^{\infty}\sum_{w=0}^{\infty}F_{r,h,w}^{n,s}\sum_{b=0}^{\infty}e^{-\theta t}\theta^bt^{\eta_{r,h,w}^{n,s}+b-1}\Big(E_{\alpha,\eta_{r,h,w}^{n,s}+b}^{f_{r,h,w}^{n,s}}(\eta t^{\alpha})-(\theta t)^\alpha E_{\alpha,\eta_{r,h,w}^{n,s}+\alpha +b}^{f_{r,h,w}^{n,s}}(\eta t^{\alpha})\Big)\nonumber\\
&\ \  -\sum_{r=0}^{\infty}\sum_{h=1}^{\infty}\sum_{w=0}^{\infty}Z_{r,h,w}^{n,s}\sum_{c=0}^{\infty}e^{-\theta t}\theta^c t^{\alpha_{r,h,w}^{n,s}+c-1}\Big(E_{\alpha,\alpha_{r,h,w}^{n,s}+c}^{z_{r,h,w}^{n,s}}(\eta t^{\alpha})-(\theta t)^\alpha E_{\alpha,\alpha_{r,h,w}^{n,s}+\alpha +c}^{z_{r,h,w}^{n,s}}(\eta t^{\alpha}) \Big).
\end{align}}

\subsection{Mean queue length}
Here, we discuss about the mean queue length of tempered Erlang queue. It is defined as follows:
\begin{equation}\label{meantemper}
M^{\theta,\alpha}(t)\coloneqq\mathbb{E}(L^{\theta,\alpha}(t)|L^{\theta,\alpha}(0)=0).
\end{equation}
Here, $\{L^{\theta,\alpha}(t)\}_{t\ge0}$ is the queue length proces of tempered Erlang queue which is defined as $L^{\theta,\alpha}(t)=g_k(Q^{\theta,\alpha}(t)), \ t\ge0$, where $g_k$ is a bijective map as given in \eqref{gk}.

Let   $r_{n}^{\theta,\alpha}(t)=\mathrm{Pr}(L^{\theta,\alpha}(t)=n|L^{\theta,\alpha}(0)=0)$, $n\ge0$ denote the state probabilities of $\{L^{\theta,\alpha}(t)\}_{t\ge0}$ such that $r_{-n}^{\theta,\alpha}(t)=0$ for all $n\geq1$. Then, from \eqref{gk}, we have
\begin{equation}\label{Questprbb}
r_{g_k(n,s)}^{\theta,\alpha}(t)=\mathcal{P}_{n,s}^{\theta,\alpha}(t)
\end{equation}
and  $r_n^{\theta,\alpha}(t)=\mathcal{P}_{a_k(n),b_k(n)}^{\theta,\alpha}(t)$. From \eqref{deertemp} and \eqref{Questprbb}, it follows that the state probabilities $r_{n}^{\theta,\alpha}(t)$, $n\ge0$ are solution of 
\begin{align}\label{sysdeerlen}
\left.
\begin{aligned}
\partial_t^{\theta,\alpha}r_{0}^{\theta,\alpha}(t)
&=-\lambda r_{0}^{\theta,\alpha}(t)+k\mu r_{1}^{\theta,\alpha}(t), \\
\partial_t^{\theta,\alpha}p_{n}^{\theta,\alpha}(t)
&=-(\lambda + k \mu)r_{n}^{\theta,\alpha}(t)+k\mu r_{n+1}^{\theta,\alpha}(t), \, 1\leq n \leq k-1, \\
\partial_t^{\theta,\alpha}r_{n}^{\theta,\alpha}(t)
&=-(\lambda + k \mu)r_{n}^{\theta,\alpha}(t)+k\mu r_{n+1}^{\theta,\alpha}(t)+\lambda r_{n-m}^{\theta,\alpha}(t), \, n \geq k,		 
\end{aligned}
\right\}
\end{align}
with $r_{0}^{\theta,\alpha}(0)=1$ and $r_{n}^{\theta,\alpha}(0)=0,\,n\geq1$. 

From \eqref{meantemper} and \eqref{sysdeerlen}, the following result holds:
\begin{theorem}
The mean queue length of tempered Erlang queue solves the following fractional Cauchy problem:
\begin{equation*}
\partial_t^{\theta,\alpha}{M}^{\theta,\alpha}(t)=k\mu r_{0}^{\theta,\alpha}(t)+k(\lambda-\mu),
\end{equation*}
with initial condition $\mathcal{M}^{\theta,\alpha}(0)=0$.
\end{theorem}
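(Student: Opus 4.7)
The plan is to mimic the argument used for the corresponding multiple-arrival result proved above (itself modeled on Theorem 3.5 of Pote and Kataria (2025)), but now on the one-dimensional queue length $\{L^{\theta,\alpha}(t)\}_{t\ge0}$. First I would start from the defining series $M^{\theta,\alpha}(t) = \sum_{n=1}^{\infty} n\, p_n^{\theta,\alpha}(t)$ and apply the Caputo tempered fractional derivative termwise. Since $\partial_t^{\theta,\alpha}$ is linear in its argument, this yields
$$\partial_t^{\theta,\alpha} M^{\theta,\alpha}(t) = \sum_{n=1}^{\infty} n\, \partial_t^{\theta,\alpha} p_n^{\theta,\alpha}(t),$$
into which I substitute the three pieces of the system \eqref{sysdeerlen}, splitting the $n$-sum according to the ranges $1\le n\le k-1$ and $n\ge k$ (interpreting the last equation's $p_{n-m}^{\theta,\alpha}$ as $p_{n-k}^{\theta,\alpha}$, which is what is consistent with a single arrival adding $k$ phases to the length process).

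The bulk of the work is a careful consolidation of the three resulting pieces by index shifts, combined with the normalization $\sum_{n\ge 0} p_n^{\theta,\alpha}(t) = 1$. Specifically, writing
$$\sum_{n\ge 1} n\, k\mu\, p_{n+1}^{\theta,\alpha}(t) = k\mu \sum_{m\ge 2}(m-1)\, p_m^{\theta,\alpha}(t) = k\mu\bigl(M^{\theta,\alpha}(t) - 1 + p_0^{\theta,\alpha}(t)\bigr)$$
and
$$\sum_{n\ge k} n\, \lambda\, p_{n-k}^{\theta,\alpha}(t) = \lambda \sum_{j\ge 0}(j+k)\, p_j^{\theta,\alpha}(t) = \lambda\bigl(M^{\theta,\alpha}(t) + k\bigr),$$
the $-(\lambda + k\mu)\, M^{\theta,\alpha}(t)$ contribution cancels precisely with $k\mu\, M^{\theta,\alpha}(t) + \lambda\, M^{\theta,\alpha}(t)$, and what remains is $k\mu\, p_0^{\theta,\alpha}(t) - k\mu + \lambda k = k\mu\, p_0^{\theta,\alpha}(t) + k(\lambda-\mu)$, which is the desired right-hand side. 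The initial condition $M^{\theta,\alpha}(0) = 0$ is immediate from $p_0^{\theta,\alpha}(0) = 1$ and $p_n^{\theta,\alpha}(0) = 0$ for $n\ge 1$.

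The only delicate point is justifying the termwise application of $\partial_t^{\theta,\alpha}$ (an integro-differential operator) to the infinite series defining $M^{\theta,\alpha}(t)$; this amounts to a uniform tail control on $n\, p_n^{\theta,\alpha}(t)$, i.e., integrable finiteness of the mean queue length, so that Fubini may be applied to the convolution representation of the Caputo tempered derivative in \eqref{caputotemp}. This is precisely the interchange-of-sum-and-operator issue addressed in Theorem 3.5 of Pote and Kataria (2025), and I would appeal to that justification rather than reproduce it here.
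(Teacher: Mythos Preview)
Your proposal is correct and follows exactly the approach the paper indicates: the paper simply states that the result follows from \eqref{meantemper} and \eqref{sysdeerlen} (paralleling the proof of the multiple-arrival version, which invokes Theorem 3.5 of Pote and Kataria (2025)), and your termwise application of $\partial_t^{\theta,\alpha}$ followed by the index-shift collapse is precisely how one carries this out. Your interpretation of $p_{n-m}^{\theta,\alpha}$ as $p_{n-k}^{\theta,\alpha}$ is also correct, and the algebra checks.
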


\begin{theorem}
The mean queue length of tempered Erlang queue is given by
\begin{align*}
M^{\theta,\alpha}(t)&=k(\lambda-\mu)e^{-\theta t}\sum_{m=0}^{\infty}\theta^m t^{m+\alpha}E_{\alpha,\alpha+m+1}(\theta^\alpha t^\alpha)\nonumber\\
&\ \ +k\mu \sum_{h=1}^{\infty}\sum_{n=0}^{\infty}Z_{h,n}^{0}e^{-\theta t} \sum_{m=0}^{\infty}\theta^m t^{\Psi_{h,n}^{M}+m-1}E_{\alpha,\Psi_{h,n}^{M}+m}^{\delta_{h,n}^{0}}(\eta t^{\alpha}),\,t\geq0,
\end{align*}
where  $\Psi_{h,n}^{M}=\alpha\delta_{h,n}^{0}+1$.
\end{theorem}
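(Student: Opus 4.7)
The plan is to mirror the derivation of the mean queue length for the multiple-arrivals case, now specialized to the single-arrival setting, working in the Laplace domain. First I would take the Laplace transform of the fractional Cauchy problem stated just before, namely $\partial_t^{\theta,\alpha} M^{\theta,\alpha}(t)=k\mu\,\mathcal{P}_0^{\theta,\alpha}(t)+k(\lambda-\mu)$ with $M^{\theta,\alpha}(0)=0$, using the identity \eqref{laplacetempcapu}. This gives
\[
\Psi_{\theta,\alpha}(z)\,\tilde{M}^{\theta,\alpha}(z)=k\mu\,\tilde{\mathcal{P}}_0^{\theta,\alpha}(z)+\frac{k(\lambda-\mu)}{z},
\]
so that
\[
\tilde{M}^{\theta,\alpha}(z)=\frac{k(\lambda-\mu)}{z\,\Psi_{\theta,\alpha}(z)}+\frac{k\mu\,\tilde{\mathcal{P}}_0^{\theta,\alpha}(z)}{\Psi_{\theta,\alpha}(z)}.
\]

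Next I would substitute the Laplace-domain representation of $\mathcal{P}_0^{\theta,\alpha}(t)$ obtained as the $l=1$ specialization of \eqref{p0alpha4}, in which the inner sum $\sum_{\sum m_j=n}$ collapses to a single term; this yields
\[
\frac{\tilde{\mathcal{P}}_0^{\theta,\alpha}(z)}{\Psi_{\theta,\alpha}(z)}=\sum_{h=1}^{\infty}\sum_{n=0}^{\infty}\frac{Z_{h,n}^{0}}{z\bigl((z+\theta)^{\alpha}-\eta\bigr)^{\delta_{h,n}^{0}}}.
\]
Thus $\tilde{M}^{\theta,\alpha}(z)$ decomposes into two explicit pieces. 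For the first piece, writing $\Psi_{\theta,\alpha}(z)=(z+\theta)^{\alpha}-\theta^{\alpha}$, I would apply the Mittag-Leffler Laplace identity \eqref{mi} with the shift $z\mapsto z+\theta$ and the $1/z$ factor handled by a convolution with $1$, exactly as in \eqref{inlshi1}--\eqref{inlshi2}, to recover $e^{-\theta t}\sum_{m\ge 0}\theta^{m}t^{m+\alpha}E_{\alpha,\alpha+m+1}(\theta^{\alpha}t^{\alpha})$. For the second piece, the same shifting-plus-convolution manoeuvre applied to $1/\bigl(z((z+\theta)^{\alpha}-\eta)^{\delta_{h,n}^{0}}\bigr)$ produces a three-parameter Mittag-Leffler term whose second index is raised by one via the $1/z$ factor, giving $\Psi_{h,n}^{M}=\alpha\delta_{h,n}^{0}+1$, while the shift $z\to z+\theta$ generates the $e^{-\theta t}\sum_{m\ge 0}\theta^{m}t^{m}$ factor.

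Assembling the two contributions and identifying parameters gives the claimed formula. The main obstacle is the careful bookkeeping of the index shifts inside the Mittag-Leffler functions: the division by $z$ raises the second index by exactly one and the Riemann-Liouville-type shift in the tempering parameter $\theta$ forces the infinite series $\sum_{m\ge 0}\theta^{m}t^{m}/m!$ (realised here through the convolution identity used in \eqref{inlshi1}) to be convolved with the three-parameter Mittag-Leffler function. Once these shifts are aligned consistently with the definitions of $\delta_{h,n}^{0}$ and $\Psi_{h,n}^{M}$, the two expressions combine into the stated closed form, so no new analytic input is required beyond the Laplace calculus used in the multiple-arrivals proof.
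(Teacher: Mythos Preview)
Your proposal is correct, and the Laplace-domain expression you reach,
\[
\tilde{M}^{\theta,\alpha}(z)=\frac{k(\lambda-\mu)}{z\,\Psi_{\theta,\alpha}(z)}+k\mu\sum_{h=1}^{\infty}\sum_{n=0}^{\infty}\frac{Z_{h,n}^{0}}{z\bigl((z+\theta)^{\alpha}-\eta\bigr)^{\delta_{h,n}^{0}}},
\]
coincides with the one the paper obtains (this is precisely the $l=1$ specialization of \eqref{meanalphat}), and the inversion via \eqref{mi}, the shift $z\mapsto z+\theta$, and the convolution device \eqref{inlshi1}--\eqref{inlshi2} is the same.

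The route to that Laplace expression, however, is different. The paper starts from the subordination identity $M^{\theta,\alpha}(t)=\int_{0}^{\infty}M(y)\,\mathrm{Pr}(Y_{\theta,\alpha}(t)\in\mathrm{d}y)$, substitutes Luchak's explicit formula for the mean queue length $M(y)$ of the ordinary Erlang queue, and only then takes the Laplace transform, invoking \eqref{lapdeninvtemp} for the inverse tempered stable density. You instead Laplace-transform the fractional Cauchy problem $\partial_t^{\theta,\alpha}M^{\theta,\alpha}(t)=k\mu\,\mathcal{P}_0^{\theta,\alpha}(t)+k(\lambda-\mu)$ directly and feed in the already-computed transform of $\mathcal{P}_0^{\theta,\alpha}$. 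Your path is arguably cleaner here: it is self-contained (no appeal to Luchak (1958) or to the Ascione \textit{et al.}\ argument) and uses only objects already produced in the paper. The paper's route has the advantage of making the time-change structure transparent and of not relying on the governing fractional equation for $M^{\theta,\alpha}$, which itself had to be established beforehand. Either way, the analytic content after reaching $\tilde{M}^{\theta,\alpha}(z)$ is identical.
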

\begin{proof}
Let $M(t)$ be the mean queue length of Erlang queue $\{Q(t)\}_{t\ge0}$. Then, by using \eqref{meantemper}, we have
\begin{equation}\label{m1talphaa}
M^{\theta,\alpha}(t)=\int_{0}^{\infty}M(y)\mathrm{Pr}(Y_{\theta,\alpha}(t)\in\mathrm{d}y).
\end{equation}
The remaining part of the proof follows similar steps  to that of Theorem 6.2 of Ascione \textit{et al.} (2020), with Eq. (21) of Luchak (1958) substituted for $M(y)$ in \eqref{m1talphaa}.
\end{proof}
\subsection{Inter-arrival and inter-phase times of tempered Erlang queue}
Here, we discuss the tempered Erlang queue in terms of its inter-arrival, inter-phase and sojourn times. 

Let $h_{n}^{\theta,\alpha}(t)=\mathrm{Pr}(N^{\theta,\alpha}(t)=n|Q^{\theta,\alpha}(0)=(0,0)),\,n\geq1$ and $
g_{s}^{\theta,\alpha}(t)=\mathrm{Pr}(S^{\theta,\alpha}(t)=s|Q^{\theta,\alpha}(0)=(0,0)),\,1\leq s\leq k$. 
Then, we have
\begin{equation}\label{hntemer}
h_{n}^{\theta,\alpha}(t)=\sum_{s=1}^{k}\mathcal{P}_{n,s}^{\theta,\alpha}(t) \text{ and }
g_{s}^{\theta,\alpha}(t)=\sum_{n=1}^{\infty}\mathcal{P}_{n,s}^{\theta,\alpha}(t),
\end{equation}
which follows from \eqref{temperdef}.
For $1\leq s\leq k$, by using \eqref{deertemp} and \eqref{hntemer}, we get
\begin{align*}
\partial_t^{\theta,\alpha}\mathcal{P}_{0}^{\theta,\alpha}(t)&=-\lambda \mathcal{P}_{0}^{\theta,\alpha}(t)+k\mu \mathcal{P}_{1,1}^{\theta,\alpha}(t),\\
\partial_t^{\theta,\alpha}h_{1}^{\theta,\alpha}(t)&=-\lambda h_{1}^{\theta,\alpha}(t)+k\mu(\mathcal{P}_{2,1}^{\theta,\alpha}(t)-\mathcal{P}_{1,1}^{\theta,\alpha}(t))+\lambda \mathcal{P}_{0}^{\theta,\alpha}(t),\\
\partial_t^{\theta,\alpha}h_{n}^{\theta,\alpha}(t)&=-\lambda h_{n}^{\alpha_{1},\alpha_{2}}(t)+k\mu(\mathcal{P}_{n+1,1}^{\theta,\alpha}(t)-\mathcal{P}_{n,1}^{\theta,\alpha}(t))+\lambda h_{n-1}^{\theta,\alpha}(t),\,n\geq 2,
\end{align*} 
with $\mathcal{P}_{0}^{\theta,\alpha}(0)=1$ and $h_{n}^{\theta,\alpha}(0)=0$, $n\geq1$.

Let $\{N_{*}(t)\}_{t\geq0}$ be a process whose state probabilities $p^*_{n}(t)=\mathrm{Pr}(N_{*}(t)=n)$, $n\geq0$ solve the following system of differential equations:
\begin{align}\label{qmDE12}
\left.
\begin{aligned}
\frac{\mathrm{d}}{\mathrm{d}t}p^*_{m}(t)&=-\lambda p^*_{m}(t),\\
\frac{\mathrm{d}}{\mathrm{d}t} p^*_{m+1}(t)&=\lambda p^*_{m}(t),\\
\frac{\mathrm{d}}{\mathrm{d}t}p^*_{n}(t)&=0,\,n\geq0,\,n\neq m, \,m+1,
\end{aligned}
\right\}
\end{align}
with $p^*_{m}(0)=1$ and $p^*_{n}(0)=0$ for $n\geq0$, $n\neq m$. Here, $\{N_*(t)\}_{t\geq0}$ is a pure birth process with rate parameter $\lambda$ as that of the arrival rate of $\{N(t)\}_{t\geq0}$, and it starts at $m$ with $m+1$ as its absorbing state. Note that $p^*_{m}(t)+p^*_{m+1}(t)=1$, $t\geq0$.

Recall that the inter-arrival time is the time elapsed between two consecutive arrivals. In the following result, we obtain the distribution of inter-arrival times of tempered Erlang queue.
\begin{theorem}\label{thminterarrival}
The inter-arrival times $T^{\theta,\alpha}$ of tempered Erlang queue $\{Q^{\theta,\alpha}(t)\}_{t\geq0}$ are independent and identically distributed (iid) with tempered Mittag-Leffler distribution. That is,
{\small\begin{align}\label{intrarr12}
\mathrm{Pr}(T^{\theta,\alpha}>t)&=e^{-\theta t}\sum_{m=0}^{\infty}(\theta t)^m\sum_{r=0}^{\infty}(-\lambda t^\alpha)^rE_{\alpha, r\alpha+m+1}^r(\theta^\alpha t^\alpha),
\end{align}}
where $E_{\alpha,r\alpha+m+1}^{r}(\cdot)$ is the three parameter Mittag-Leffler function as defined in \eqref{mitag}.
\end{theorem}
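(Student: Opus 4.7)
The plan is to establish the iid property of $T^{\theta,\alpha}$ via the arrival-time representation of the time-changed Poisson process, and then to compute the common survival function by a Laplace transform inversion. For the first step, since $\{Y_{\theta,\alpha}(t)\}_{t\geq 0}$ is the first passage time of the tempered stable subordinator $\{D_{\theta,\alpha}(t)\}_{t\ge 0}$, the $j$-th arrival time of $\{N^{\theta,\alpha}(t)\coloneqq N(Y_{\theta,\alpha}(t))\}_{t\ge 0}$ equals $D_{\theta,\alpha}(\xi_j)$ almost surely, where $\xi_j$ denotes the $j$-th arrival time of the underlying Poisson process $\{N(t)\}_{t\ge 0}$ whose inter-arrival times $\xi_j-\xi_{j-1}$ are iid exponential$(\lambda)$. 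Because $D_{\theta,\alpha}$ is a L\'evy process with independent stationary increments, independent of $\{\xi_j\}$, the consecutive differences $T_j^{\theta,\alpha}=D_{\theta,\alpha}(\xi_j)-D_{\theta,\alpha}(\xi_{j-1})$ are iid with the common law of $D_{\theta,\alpha}(E)$ for an independent exponential$(\lambda)$ random variable $E$.

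To identify this common distribution, I would first use
\begin{equation*}
\mathrm{Pr}(T^{\theta,\alpha}>t)=\mathrm{Pr}(N^{\theta,\alpha}(t)=0)=\mathbb{E}[e^{-\lambda Y_{\theta,\alpha}(t)}],
\end{equation*}
obtained by conditioning on $Y_{\theta,\alpha}(t)$ and using the Poisson survival probability. Taking the Laplace transform in $t$ and invoking \eqref{lapdeninvtemp} gives
\begin{equation*}
\int_0^\infty e^{-st}\mathrm{Pr}(T^{\theta,\alpha}>t)\,\mathrm{d}t=\frac{\Psi_{\theta,\alpha}(s)}{s}\int_0^\infty e^{-(\lambda+\Psi_{\theta,\alpha}(s))y}\,\mathrm{d}y=\frac{\Psi_{\theta,\alpha}(s)}{s(\lambda+\Psi_{\theta,\alpha}(s))}.
\end{equation*}
I would then expand this as the double geometric series
\begin{equation*}
\frac{\Psi_{\theta,\alpha}(s)}{s(\lambda+\Psi_{\theta,\alpha}(s))}=\sum_{m=0}^\infty\sum_{r=0}^\infty\frac{\theta^m(-\lambda)^r}{(s+\theta)^{m+1}((s+\theta)^\alpha-\theta^\alpha)^r},
\end{equation*}
valid for $s$ sufficiently large, coming from $1/s=\sum_{m\ge 0}\theta^m/(s+\theta)^{m+1}$ and $\Psi_{\theta,\alpha}(s)/(\lambda+\Psi_{\theta,\alpha}(s))=\sum_{r\ge 0}(-\lambda)^r/((s+\theta)^\alpha-\theta^\alpha)^r$. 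Inverting each term via \eqref{mi} with $\gamma=r$ and $\beta=\alpha r+m+1$, followed by the Laplace shift $\mathcal{L}^{-1}(\tilde f(s+\theta))(t)=e^{-\theta t}f(t)$, produces $e^{-\theta t}t^{\alpha r+m}E^r_{\alpha,\alpha r+m+1}(\theta^\alpha t^\alpha)$ for each pair $(m,r)$; resumming then recovers \eqref{intrarr12}.

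The main technical obstacle is the justification of the termwise Laplace inversion: I must verify convergence of the geometric expansion in $\lambda/((s+\theta)^\alpha-\theta^\alpha)$ (which holds for $s$ large enough) together with Fubini-type interchange of summation and integration, and then invoke uniqueness of the Laplace transform to extend the resulting identity to all $t>0$. A subsidiary but essential point is showing that $D_{\theta,\alpha}$ almost surely has no jump at any $\xi_j$—so that the equality $T_j^{\theta,\alpha}=D_{\theta,\alpha}(\xi_j)$ genuinely identifies the arrival times, rather than merely $D_{\theta,\alpha}(\xi_j^-)$—which follows from the absolute continuity of the L\'evy measure of $D_{\theta,\alpha}$ and the independence of $\{\xi_j\}$ from $D_{\theta,\alpha}$.
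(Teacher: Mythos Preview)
Your argument is correct but follows a genuinely different route from the paper's. The paper establishes the iid property by invoking the semi-Markov structure of $\{N^{\theta,\alpha}(t)\}$ (via Gikhman--Skorokhod) and then introduces an auxiliary two-state pure birth process $\{N_*(t)\}$ starting at $m$ with $m+1$ absorbing; it shows that the time-changed state probability $\textbf{P}_m^{\theta,\alpha}(t)$ satisfies the Caputo tempered fractional equation $\partial_t^{\theta,\alpha}\textbf{P}_m^{\theta,\alpha}=-\lambda\textbf{P}_m^{\theta,\alpha}$ and then cites the $Z$-transform technique of Gupta \textit{et al.} (2020) to solve it. Your approach bypasses both the fractional Cauchy problem and the external $Z$-transform reference: the subordinator representation $T_j^{\theta,\alpha}=D_{\theta,\alpha}(\xi_j)-D_{\theta,\alpha}(\xi_{j-1})$ yields the iid property by a direct L\'evy-increments argument, and the explicit survival function comes from a clean double geometric expansion of the Laplace transform followed by \eqref{mi} and the shift rule. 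Your method is more elementary and self-contained; the paper's method, on the other hand, slots naturally into its overarching program of deriving tempered fractional governing equations for each quantity of interest, so it has thematic coherence that your route does not emphasize. One minor caveat: the paper's definition \eqref{mitag} requires $\gamma>0$, so the $r=0$ term in your series needs the standard convention $E^{0}_{\alpha,\beta}(x)=1/\Gamma(\beta)$, which is consistent with your inversion since $\theta^m/(s+\theta)^{m+1}$ inverts to $e^{-\theta t}t^m/m!$.
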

\begin{proof}
From the definition of tempered Erlang queue, its inter-arrival times are independent. 
Let $T$ denote the arrival time of first new customer in the Erlang queue $\{Q(t)\}_{t\geq0}$ starting from $N_{*}(0)=m$. Then, its distribution function is given by
	\begin{align*}
		F_{T}(t)\coloneq\mathrm{Pr}(T\leq t)&=1-\mathrm{Pr}(T>t)\\
		&=1-\mathrm{Pr}(N_{*}(t)=m)\\
		&=\mathrm{Pr}(N_{*}(t)=m+1)=p^*_{m+1}(t).
	\end{align*} 
Let us consider the time-changed process
	$N_{*}^{\theta,\alpha}(t)=N_{*}(Y_{\theta,\alpha}(t))$, $t\geq0$
	with state probabilities
	\begin{equation*}
	\kappa_m^{\theta,\alpha}(t) = \Pr\big(N_{*}^{\theta,\alpha}(t) = m\big) \quad \text{and} \quad \kappa_{m+1}^{\theta,\alpha}(t) = \Pr\big(N_{*}^{\theta,\alpha}(t) = m+1\big).
	\end{equation*}
	The process $\{N_{*}^{\theta,\alpha}(t)\}_{t \geq 0}$ starts at state $m$, where $m+1$ is absorbing state. Define $T^{\theta,\alpha}$ as the arrival time of the first new customer in the tempered Erlang queue $\{Q^{\theta,\alpha}(t)\}_{t \geq 0}$, given $N_{*}^{\theta,\alpha}(0) = m$. Then, the distribution function of $T^{\theta,\alpha}$ is
	\begin{equation*}
	F_{T^{\theta,\alpha}}(t) = \Pr\big(T^{\theta,\alpha} \leq t\big) = \kappa_{m+1}^{\theta,\alpha}(t).
	\end{equation*}
	
Note that $\{N^{\theta,\alpha}(t)\}_{t\geq0}$ is a semi-Markov process because $\{N(t)\}_{t\geq0}$ is a Markov process. Let us denote its $n$th jump time by $T_{n}$. From Gikhman and Skorohod (1975), it follows that the discrete time  process $N^{\theta,\alpha}(T_{n})$ is a time-homogeneous Markov chain. Therefore, $T^{\theta,\alpha}$ can be considered as the inter-arrival times of tempered Erlang queue $\{Q^{\theta,\alpha}(t)\}_{t\geq0}$.
	
We need to show that the state probabilities of $\{N_{*}^{\theta,\alpha}(t)\}_{t\geq0}$ solve the following differential equations:
\begin{align}
\partial_t^{\theta,\alpha}  \kappa_{m}^{\theta,\alpha}(t)&=-\lambda  \kappa_{m}^{\theta,\alpha}(t),\label{tqm12}\\
\partial_t^{\theta,\alpha}  \kappa_{m+1}^{\theta,\alpha}(t)&=\lambda  \kappa_{m}^{\theta,\alpha}(t)\label{tqmm12},
\end{align}
with $  \kappa_{m}^{\theta,\alpha}(0)=1$ and $  \kappa_{m+1}^{\theta,\alpha}(0)=0$.
	
The Eq. \eqref{tqm12} holds true if and only if the Laplace transform $\tilde{\kappa}_{m}^{\theta,\alpha}(z)$ of $  \kappa_{m}^{\theta,\alpha}(t)$ solves:
\begin{equation}\label{qmDElp12}
\frac{\Psi_{\theta,\alpha}(z)}{z}(z\tilde{  \kappa}_{m}^{\theta,\alpha}(z)-1)=-\lambda\tilde{  \kappa}_{m}^{\theta,\alpha}(z),\,z>0,
\end{equation}
where we have used \eqref{laplacetempcapu}.	
Note that
\begin{equation}\label{temstr}
  \kappa_{m}^{\theta,\alpha}(t)=\mathrm{Pr}(N_{*}(Y_{\theta,\alpha}(t))=m)
=\int_{0}^{\infty}  \kappa_{m}(y)\mathrm{Pr}\{Y_{\theta,\alpha}(t)\in \mathrm{d}y\}.
\end{equation}
On taking the Laplace transform on both sides of \eqref{temstr} and by using \eqref{lapdeninvtemp}, we obtain 
\begin{equation}\label{lpqm12}
\tilde{  \kappa}_{m}^{\theta,\alpha}(z)=\frac{\Psi_{\theta,\alpha}(z)}{z}\int_{0}^{\infty}  \kappa_{m}(y)e^{-y\Psi_{\theta,\alpha}(z)} \mathrm{d}y.
\end{equation}
Now, on substituting \eqref{lpqm12} in \eqref{qmDElp12}, we get
\begin{equation}\label{qmDElp121}
\psi_{\theta,\alpha}(z)\int_{0}^{\infty}  \kappa_{m}(y)e^{-y\Psi_{\theta,\alpha}(z)} \mathrm{d}y-\frac{\psi_{\theta,\alpha}(z)}{z}=-\lambda\int_{0}^{\infty}  \kappa_{m}(y)e^{-y\Psi_{\theta,\alpha}(z)} \mathrm{d}y.
\end{equation}
By using \eqref{qmDE12} in \eqref{qmDElp121} and simplifying, we get \eqref{tqm12}. Similarly, it can be established that \eqref{tqmm12} holds true. Finally, the proof is complete on solving \eqref{tqm12} using $Z$-transform method, the technique used by  Gupta \textit{et al.} (2020) in the proof of Proposition 3.4.
\end{proof}
\begin{theorem}\label{thminterphase12}
The inter-phase times $I^{\theta,\alpha}$ of tempered Erlang queue are iid, and has the following distribution:
{\small\begin{align}\label{intrarr112}
\mathrm{Pr}(I^{\theta,\alpha}>t)&=e^{-\theta t}\sum_{m=0}^{\infty}(\theta t)^m\sum_{r=0}^{\infty}(-k\mu t^\alpha)^rE_{\alpha, r\alpha+m+1}^r(\theta^\alpha t^\alpha),
\end{align}}
where $E_{\alpha, r\alpha+m+1}^r(\cdot)$ is the three parameter Mittag-Leffler function defined in \eqref{mitag}.
\end{theorem}
\begin{proof}
The proof follows similar lines to that of Theorem \ref{thminterarrival}. Thus, it is omitted. 
\end{proof}

\begin{remark}\label{2cor12}
Let $Y_{1},Y_{2},\dots, Y_{k}$ be the inter-phase times of tempered Erlang queue and $X_k=Y_{1}+Y_{2}+\dots +Y_{k}$. As $Y_{i}$'s are iid, the Laplace transform of its pdf is given by 
\begin{equation}\label{servlp12}
	\tilde{f}_{X_k}(z)=\Big(\frac{k\mu}{k\mu+(z+\theta)^{\alpha}-\theta^{\alpha}}\Big)^{k},
\end{equation} 
where we have used the Laplace transform of the pdf of $Y_{1}$ (see Meerschaert \textit{et al.} (2011), Example 5.7).
\end{remark}
Next, we give the distribution of the sojourn time of tempered Erlang queue, that is, the time spent in a particular state before its transition to another state. 
Its proof follows similar lines to that of Theorem \ref{thminterarrival}. Thus, it is omitted.  
\begin{theorem}\label{thmsojourn}
The sojourn times $S^{\theta,\alpha}$ of queue length process $\{L^{\theta,\alpha}(t)\}_{t\geq0}$ for a non-zero state are iid, and has the following distribution function:
{\small\begin{align*}
\mathrm{Pr}(S^{\theta,\alpha}>t)&=e^{-\theta t}\sum_{m=0}^{\infty}(\theta t)^m\sum_{r=0}^{\infty}(-(\lambda+k\mu) t^\alpha)^rE_{\alpha, r\alpha+m+1}^r(\theta^\alpha t^\alpha),
\end{align*}}
where $E_{\alpha,r\alpha+m+1}^r(\cdot)$ is the three parameter Mittag-Leffler function defined in \eqref{mitag}.
\end{theorem}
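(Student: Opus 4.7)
The plan is to mirror the three-step template used in Theorem \ref{thminterarrival}, replacing the pure-birth rate $\lambda$ (which governed inter-arrivals) by the total exit rate $\lambda+k\mu$ from any non-zero state of the queue length process. Indeed, while the queue length process sits in a non-zero state $n\ge 1$, the next transition occurs either because of a Poisson arrival (rate $\lambda$) or because of a service phase completion (rate $k\mu$), and these two sources are independent. So in the un-timechanged Erlang queue the sojourn time in any non-zero state is exponentially distributed with rate $\lambda+k\mu$, and by the strong Markov property these sojourn times are iid.

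First, I would introduce an auxiliary two-state absorbing process $\{R_*(t)\}_{t\ge 0}$ with $R_*(0)=m$ and absorbing state $m+1$, whose state probabilities $\textbf{Q}_m(t)$ and $\textbf{Q}_{m+1}(t)$ solve the analogue of \eqref{qmDE12} with $\lambda$ replaced by $\lambda+k\mu$. Thus $\textbf{Q}_m(t)=e^{-(\lambda+k\mu)t}$ and $\textbf{Q}_{m+1}(t)=1-e^{-(\lambda+k\mu)t}$, and the first-passage time of $R_*$ from $m$ to $m+1$ equals in distribution the sojourn time of the queue length process in any non-zero state. Then I would time-change this process by the independent inverse tempered stable subordinator, setting $R_*^{\theta,\alpha}(t):=R_*(Y_{\theta,\alpha}(t))$ and denoting its state probabilities by $\textbf{Q}_m^{\theta,\alpha}(t)$, $\textbf{Q}_{m+1}^{\theta,\alpha}(t)$. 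Since $\{L^{\theta,\alpha}(t)\}$ is semi-Markov (because $\{L(t)\}$ is Markov and $\{Y_{\theta,\alpha}(t)\}$ is independent), the argument of Gikhman and Skorohod (1975) used in Theorem \ref{thminterarrival} shows that the successive sojourn times $S^{\theta,\alpha}$ remain iid with $\mathrm{Pr}(S^{\theta,\alpha}>t)=\textbf{Q}_m^{\theta,\alpha}(t)$.

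Second, I would verify that $\textbf{Q}_m^{\theta,\alpha}$ satisfies the fractional equation
\begin{equation*}
\partial_t^{\theta,\alpha}\textbf{Q}_m^{\theta,\alpha}(t)=-(\lambda+k\mu)\textbf{Q}_m^{\theta,\alpha}(t),\qquad \textbf{Q}_m^{\theta,\alpha}(0)=1.
\end{equation*}
This is the exact analogue of \eqref{tqm12}, and the verification proceeds in the same way: express $\textbf{Q}_m^{\theta,\alpha}(t)$ as an integral against the density of $Y_{\theta,\alpha}(t)$, take Laplace transforms using \eqref{lapdeninvtemp} and \eqref{laplacetempcapu}, and collapse the resulting identity by invoking the un-timechanged differential equation for $\textbf{Q}_m$.

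Third, I would solve the above fractional Cauchy problem. The cleanest route is the $Z$-transform trick used in Proposition 3.4 of Gupta \emph{et al.} (2020), cited already in the proof of Theorem \ref{thminterarrival}: expanding $e^{-\theta t}\textbf{Q}_m^{\theta,\alpha}(t)$ and using \eqref{laplacetempcapu} together with \eqref{mi} produces the double series
\begin{equation*}
\mathrm{Pr}(S^{\theta,\alpha}>t)=e^{-\theta t}\sum_{m=0}^{\infty}(\theta t)^m\sum_{r=0}^{\infty}\bigl(-(\lambda+k\mu)t^\alpha\bigr)^{r}E_{\alpha,\,r\alpha+m+1}^{r}(\theta^\alpha t^\alpha),
\end{equation*}
which is the claimed expression. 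The main obstacle is bookkeeping rather than conceptual: one must keep track of the shifting-by-$\theta$ factors arising from \eqref{caputotemp} and confirm absolute convergence of the double series so that inverse Laplace transforms can be exchanged with summation; this is exactly the delicate step in Theorem \ref{thminterarrival}, and since only the constant $\lambda$ is replaced by $\lambda+k\mu$, the same justification carries through verbatim.
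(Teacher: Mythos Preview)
Your proposal is correct and matches the paper's own approach: the paper simply states that the proof ``follows similar lines to that of Theorem \ref{thminterarrival}'' and omits the details, and you have carried out precisely that adaptation, replacing the arrival rate $\lambda$ by the total exit rate $\lambda+k\mu$ from a non-zero state.
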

\begin{remark}
Observe that the sojourn time $S^{\theta,\alpha}$ of queue length process of tempered Erlang queue in state $0$ is the arrival time of first customer in an empty system. Thus, it is distributed according to \eqref{intrarr12}.
\end{remark}
\begin{remark}
Following along the lines of the proof of Corollary 4.6 of Ascione \textit{et al.} (2020), it can be established that the inter-arrival times and inter-phase times of the tempered Erlang queue are not necessarily independent. 
\end{remark}

%\section{Gamma Erlang queue with multiple arrivals}
%Here, we study the Erlang queue with multiple arrivals time-changed by the first exit-time of an independent gamma subordinator. It is defined as follows:
%\begin{equation}\label{erqingam}
%\mathcal{Q}_{a,b}(t)\coloneqq \mathcal{Q}(L_{a,b}(t)), \, t\ge0,
%\end{equation}
%where $\{\mathcal{Q}(t)\}_{t\ge0}$ is independent of the inverse gamma subordinator $\{L_{a,b}(t)\}_{t\ge0}$. We call the time-changed process $\{\mathcal{Q}_{a,b}(t)\}_{t\ge0}$ as the gamma Erlang queue with multiple arrivals.
%

\subsection{Distribution of conditional waiting times} The derivation of waiting times in single-channel queues relies on the Markov property of queues (see Gaver (1954)). The same approach cannot be used in the case of tempered Erlang queue as it lacks the Markov property and belongs to the class of semi-Markov processes. As obtained in Ascione et al. (2020), some information about the waiting times of tempered Erlang queue can be derived by applying suitable conditioning techniques.

For fix $t_0>0$, let us consider a random variable $X$ which has the following distribution function:
\begin{equation}\label{surdef}
F_X(t)=\mathrm{Pr}(X\le t_0+t|X> t_0)=1-\frac{\phi^{\theta,\alpha}(t_0+t)}{\phi^{\theta,\alpha}(t_0)},
\end{equation}
with parameters $t_0$ and $k\mu$, where $\phi^{\theta,\alpha}(t)=\mathrm{Pr}\{I^{\theta,\alpha}>t\}$ as given in \eqref{intrarr112}.

From \eqref{surdef}, the pdf of $X$ can be written as
\begin{equation}\label{surpdf}
f_X(t)=-\frac{1}{\phi^{\theta,\alpha}(t_0)}\frac{\mathrm{d}}{\mathrm{d}t}\phi^{\theta,\alpha}(t_0+t).
\end{equation}
On taking the Laplace transform of \eqref{surpdf}, we get
\begin{align}\label{lpsurden}
\tilde{f}_X(z)&=-\frac{1}{\phi^{\theta,\alpha}(t_0)}(z\tilde{\phi}^{\theta,\alpha}(t_0+t)-\phi^{\theta,\alpha}(0))\nonumber\\
&=1-\frac{1}{\phi^{\theta,\alpha}(t_0)}\int_{0}^{\infty}e^{-zt}\phi^{\theta,\alpha}(t_0+t)\, \mathrm{d}t\nonumber\\
&=1-\frac{ze^{-\theta t_0}}{\phi^{\theta,\alpha}(t_0)}\sum_{m=0}^{\infty}\theta^m\sum_{r=0}^{\infty}(-k\mu)^r\nonumber\\
&\ \ \cdot\sum_{x=0}^{\infty}\frac{\Gamma(r+x)\theta^{\alpha x}}{x!\Gamma(r)\Gamma(\alpha(r+x)+m+1)} \int_{0}^{\infty}e^{-t(z+\theta)}(t_0+t)^{\alpha(r+x)+m}\,\mathrm{d}t\nonumber\\
&=1-\frac{ze^{-\theta t_0}}{\phi^{\theta,\alpha}(t_0)}\sum_{m=0}^{\infty}\theta^m\sum_{r=0}^{\infty}(-k\mu)^r\nonumber\\
&\ \ \cdot \sum_{x=0}^{\infty}\frac{\Gamma(r+x)\theta^{\alpha x}}{x!\Gamma(r)\Gamma(\alpha(r+x)+m+1)}\frac{e^{(z+\theta)t_0}\Gamma(\alpha(r+x)+m+1,(z+\theta)t_0)}{(z+\theta)^{\alpha(r+x)+m+1}},
\end{align}
where the penultimate step follows on using \eqref{intrarr112}. Here, $\Gamma(x,y)$ is the analytic extension of the upper incomplete gamma function on $\mathbb{C}^2$ (see Gradshteyn \textit{et al.} (2007)).

The waiting time of a customer entering in the system at time $t>0$ is the amount of time that he has to spend in the service. We denote it by $W^{\theta,\alpha}(t)$. Also, we define a random set $\mathscr{S}_t^{\theta,\alpha}$ which represents the collection of time instants before $t$ at which a service phase has been completed, that is, 
\begin{equation*}
	\mathscr{S}_{t}^{\theta,\alpha}({\omega})=\{0\leq s \leq t: \mathcal{L}^{\theta,\alpha}(s^{-})(\omega)=\mathcal{L}^{\theta,\alpha}(s)(\omega)+1\}.
\end{equation*}
 Let $\mathcal{M}^{\theta,\alpha}_{t}=|\mathscr{S}_{t}^{\theta,\alpha}|$ be a random variable which counts the number of phases completed before time $t$. Also, let $\mathcal{Y}^{\theta,\alpha}_{t}$ be the last instant before $t$ at which a phase has been completed, that is,
\begin{equation*}
	\mathcal{Y}^{\theta,\alpha}_{t}=\begin{cases}
		\sup \mathscr{S}_{t}^{\theta,\alpha},\, \mathcal{M}^{\theta,\alpha}_{t}>0,\\
		t,\, \mathcal{M}^{\theta,\alpha}_{t}=0.
	\end{cases}
\end{equation*}

For a fixed $t>0$, the random time  $\mathcal{Y}^{\theta,\alpha}_{t}$ is a Markov time that coincides with the jump times of semi-Markov process $\{S^{\theta,\alpha}(t)\}_{t\geq0}$ except in the case when $\mathcal{Y}^{\theta,\alpha}_{t}=t$. The semi-regenerative set consists of the discontinuity points of $\{S^{\theta,\alpha}(t)\}_{t\geq0}$ as it is a semi-Markov process. If $\mathcal{Y}^{\theta,\alpha}_{t}=t_{0}$ for some $t_{0}<t$, the past and future of $\{S^{\theta,\alpha}(t)\}_{t\geq0}$ are independent (see Cinlar (1974)). Thus, for waiting time, we condition on $\{\mathcal{L}^{\theta,\alpha}(t)=n\}$ and $\{\mathcal{Y}^{\theta,\alpha}_{t}=t_{0}\}$. 

Let
\begin{equation*}
w^{\theta,\alpha}(u;t,t_{0},n)\,\mathrm{d}u=\mathrm{Pr}(W_{t}^{\theta,\alpha}\in \mathrm{d}u|\mathcal{Y}^{\theta,\alpha}_{t}=t_{0},\mathcal{L}^{\theta,\alpha}(t)=n).
\end{equation*} 	

For $1\leq j\leq n-1$, let $T_{j}$ denote the duration of $j$th phase whose service is not yet started and $T_{n}$ corresponds to the remaining service time of the phase in progress at time $t$. From Theorem \ref{thminterphase12}, it follows that $T_{j}$, $1\leq j\leq n-1$ are iid. Thus, $W=T_1+T_2+\dots+T_n$ has the distribution given by the sum of $n-1$ iid random variables with distribution given in \eqref{intrarr112}. For the distribution of $T_{n}$, it is known that the last phase was completed at $t_{0}$ as we have conditioned on $\mathcal{Y}^{\theta,\alpha}_{t}=t_{0}$. So, the phase being served at time $t$ started its service at time $t_{0}$. Thus, $T_{n}$ has a distribution given in \eqref{surdef} with parameters $t-t_{0}$ and $k\mu$.

Conditioning on the event $\{\mathcal{L}^{\theta,\alpha}(t)=n\}$, a customer must wait for the completion of $n$ independent phases. Among these, one of the $n$ phases began at $t_{0}$ and remains unfinished at $t$. That is, 
\begin{equation*}
\mathbb{E}(W^{\theta,\alpha}_{t}|\mathcal{Y}^{\theta,\alpha}_{t}=t_{0},\mathcal{L}^{\theta,\alpha}(t)=n)=\sum_{j=1}^{n}T_{j}.
\end{equation*}
Now, by using the independence of $T_{j}$'s, $j=1,2,\dots,n$, we have
\begin{equation}\label{wtden12}
	w^{\theta,\alpha}(u;t,t_{0},n)=f_{{W}}*f_{T_{n}}(u).
\end{equation}
On taking the Laplace transform of \eqref{wtden12}, and using \eqref{servlp12} and \eqref{lpsurden}, we get
{\small\begin{align*}
	\tilde{w}(z;t,t_{0},n)
	&=\Big(\frac{k\mu}{k\mu+(z+\theta)^\alpha-\theta^\alpha}\Big)^{n-1}\Bigg(1-\frac{ze^{-\theta (t-t_0)}}{\phi^{\theta,\alpha}(t-t_0)}\sum_{m=0}^{\infty}\theta^m\sum_{r=0}^{\infty}(-k\mu)^r\\ &\ \ \cdot\sum_{x=0}^{\infty}\frac{\Gamma(r+x)\theta^{\alpha x}}{x!\Gamma(r)\Gamma(\alpha(r+x)+m+1)}\frac{e^{(z+\theta)(t-t_0)}\Gamma(\alpha(r+x)+m+1,(z+\theta)(t-t_0))}{(z+\theta)^{\alpha(r+x)+m+1}}\Bigg).
\end{align*} }

\section*{Acknowledgement}
The	first author thanks Government of India for the grant of Prime Minister's Research Fellowship, ID 1003066.

\end{document}